\documentclass[11pt,reqno]{amsart}
\usepackage{amssymb}
\usepackage{amsmath}
\usepackage{graphicx}
\usepackage[latin1]{inputenc}
\usepackage{amsfonts}
\usepackage[spanish,english]{babel}
\usepackage{comma}
\usepackage[active]{srcltx}
\usepackage{url}
\usepackage{hyperref}

\newtheorem{theorem}{Theorem}
\theoremstyle{plain}

\newtheorem{corollary}[theorem]{Corollary}

\newtheorem{definition}[theorem]{Definition}
\newtheorem{example}[theorem]{Example}

\numberwithin{equation}{section}

\newcommand{\seqnum}[1]{\href{http://oeis.org/#1}{\underline{#1}}}

\DeclareMathOperator{\lcm}{lcm}

\begin{document}
\title[Overpseudoprimes and primover numbers]{Overpseudoprimes, and Mersenne and Fermat numbers as primover numbers}

\author[V. Shevelev et al.]{Vladimir Shevelev}
\address{Vladimir Shevelev, Department of Mathematics, Ben-Gurion University of the
 Negev, Beer-Sheva 84105, Israel}
\email{shevelev@bgu.ac.il}
\author[]{Gilberto Garc\'ia-Pulgar\'in}
\address{Gilberto Garc\'ia-Pulgar\'in, Universidad de Antioquia, Medellín-Colombia}
\email{gigarcia@ciencias.udea.edu.co}
\author[]{Juan Miguel Vel\'asquez-Soto}
\address{Juan Miguel Vel\'asquez-Soto, Departamento de Matemáticas, Universidad del Valle, Cali-Colombia}
\email{jumiveso@univalle.edu.co}
\author[]{John H. Castillo}
\address{John H. Castillo, Departamento de Matemáticas y Estadística, Universidad de Nariño, San Juan de Pasto-Colombia}
\email{jhcastillo@gmail.com}

\subjclass[2010]{Primary 11A51; Secondary 11A41, 11A07.}
\keywords{Mersenne numbers, cyclotomic cosets of $2$ modulo $n$,
order of $2$ modulo $n$, Poulet pseudoprime, super-Poulet
pseudoprime, overpseudoprime, Wieferich prime}

\begin{abstract}
We introduce a new class of pseudoprimes-so called
``overpseudoprimes to base $b$", which is a subclass of strong
pseudoprimes to base $b$. Denoting via $|b|_n$ the multiplicative
order of $b$ modulo $n$, we show that a composite $n$ is
overpseudoprime if and only if $|b|_d$ is invariant for all divisors
$d>1$ of $n$. In particular, we prove that all composite Mersenne
numbers $2^{p}-1$, where $p$ is prime, are overpseudoprime to base
$2$ and squares of Wieferich primes are overpseudoprimes to base
$2$. Finally, we show that some kinds of well known numbers are
overpseudoprime to a base $b$.
\end{abstract}
\maketitle
\section{Introduction}
First and foremost, we recall some definitions and fix some
notation. Let $b$ an integer greater than $1$ and $N$ a positive
integer relatively prime to $b$. Throughout, we denote by $|b|_N$
the multiplicative order of $b$ modulo $N$. For a prime $p$,
$\nu_{p}(N)$ means the greatest exponent of $p$ in the prime
factorization of $N$.

Fermat's little theorem implies that $2^{p-1}\equiv 1 \pmod {p}$,
where $p$ is an odd prime $p$. An odd prime $p$, is called a
Wieferich prime if $2^{p-1}\equiv 1\pmod{p^2}$,

We recall that a Poulet number, also known as Fermat pseudoprime to
base $2$, is a composite number $n$ such that $2^{n-1}\equiv 1
\pmod{n}$. A Poulet number $n$  which verifies that $d$ divides
$2^d-2$ for each divisor $d$ of $n$, is called a Super-Poulet
pseudoprime.

Sometimes the numbers $M_n=2^n-1, \enskip n=1,2,\ldots $, are called
Mersenne numbers, although this name is usually reserved for numbers
of the form

\begin{equation}\label{1}
M_p= 2^p -1
\end{equation}
where $p$ is prime. In this form numbers $M_p$, at the first time,
    were studied by Marin Mersenne (1588-1648)  around 1644; see Guy
\cite[\S A3]{GUY04} and a large bibliography there.

In the next section, we introduce a new class of pseudoprimes and we
prove that it just contains the odd numbers $n$ such that $|2|_d$ is
invariant for all divisors greater than $1$ of $n$. In particular,
we show that it contains all composite Mersenne numbers and, at
least, squares of all Wieferich primes. In the fourth section, we
give a generalization of this concept to arbitrary bases $b>1$ as
well. In the final section, we put forward some of its consequences.

We note that, the concept of overpseudoprime to base $b$ was found
in two independent ways. The first one in 2008, by Shevelev
\cite{overpseudoprimes} and the second one, by Castillo et al.
\cite{midypseudoprimes}, using consequences of the Midy's property,
where overpseudoprimes numbers are denominated Midy pseudoprimes.

The first sections of the present work is a revisited version of
Shevelev \cite{overpseudoprimes}. In the last section, we present a
review of Shevelev \cite{primoverization}, using results from
Castillo et al. \cite{midypseudoprimes}.

The sequences \seqnum{A141232}, \seqnum{A141350} and
\seqnum{A141390} in \cite{SEQ}, are result of the earlier work of
Shevelev.

\section{A class of pseudoprimes}

Let $n>1$ be an odd number. When we multiply by $2$ the set of
integers modulo $n$, we split it in different sets called
\emph{cyclotomic cosets}. The cyclotomic coset containing $s\neq 0$
consists of $C_s=\{s,2s,2^2s,\ldots,2^{m_s-1}s\}$, where $m_s$ is
the smallest positive number such that $2^{m_s}\cdot s\equiv s\pmod
n$. Actually, it is easy to see that
$m_s=|2|_{\frac{n}{\gcd(n,s)}}$. For instance the cyclotomic cosets
modulo $15$ are

\begin{align*}
C_1&=\{1,2,4,8\},\\
C_3&=\{3,6,12,9\},\\
C_5&=\{5,10\}, \text{ and }\\
C_7&=\{7,14,13,11\}.
\end{align*}

Denote by $r=r(n)$, the number of distinct cyclotomic cosets of $2$
modulo $n$. From the above example, $r(15)=4$.

Note that, if $C_1,\ldots, C_r$ are the different cyclotomic cosets
of $2$ modulo $n$, then
\begin{equation}\label{2}
\bigcup^r_{j=1}C_j=\{1,2,\ldots, n-1\} \text{ and } C_{j_1}\cap
C_{j_2}=\varnothing , \;\; j_1\neq j_2.
\end{equation}

We can demonstrate that

\begin{equation}\label{3}
|2|_n=\lcm(|C_1|,\ldots,|C_r|).
\end {equation}

If $p$ is an odd prime the cyclotomic cosets have the same number of
elements, because for each $s\neq 0$ we have
$m_s=|C_s|=|2|_{\frac{p}{\gcd(p,s)}}=|2|_p$. So
\begin{equation}\label{4}
|C_1|=\cdots=|C_r|.
\end{equation}
Therefore, when $p$ is an odd prime, we obtain
\begin{equation}\label{5}
p= r(p)|2|_p + 1.
\end{equation}
This leave us to study composite numbers such that the equation
\eqref{5} holds.

\begin{definition}
We say that  an odd composite number $n$ is an overpseudoprime to
base $2$, if
\begin{equation}\label{6}
n=r(n)|2|_n+1.
\end {equation}
\end{definition}

Note that if $n$ is an overpseudoprime to base $2$, then $2^{n-1}=
2^{r(n)|2|_n}\equiv 1\pmod{n}$. Thus, the set of overpseudoprimes to
base $2$ is a subset of the set of Poulet pseudoprimes to base $2$.

\begin{theorem}\label{t2}

Let $n=p_1^{l_1}\cdots p_k^{l_k}$ be an odd composite number. Then
$n$ is an overpseudoprime to base $2$ if and only if
\begin{equation}\label{8} |2|_n=|2|_{d},
\end{equation} for each divisor $d>1$
of $n$.

\end{theorem}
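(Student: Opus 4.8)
The plan is to count the nonzero residues modulo $n$ in two ways, using the partition into cyclotomic cosets together with the fact that every coset size divides $|2|_n$. The defining equation \eqref{6} is, after all, just the statement that the average coset size equals $|2|_n$, so the whole argument amounts to recognizing when that average is actually the maximum.

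First I would record the basic counting identity. Since by \eqref{2} the distinct cosets $C_1,\dots,C_r$ partition $\{1,2,\dots,n-1\}$, we have $n-1=\sum_{j=1}^r|C_j|$. Now each coset size is $|C_s|=m_s=|2|_{n/\gcd(n,s)}$, and since $n/\gcd(n,s)$ is a divisor of $n$, the elementary fact that $e\mid n$ implies $|2|_e\mid|2|_n$ shows that $|C_j|$ divides $|2|_n$ for every $j$; in particular $|C_j|\le|2|_n$. Summing these inequalities yields
\begin{equation*}
n-1=\sum_{j=1}^r|C_j|\le r|2|_n,
\end{equation*}
and, crucially, equality holds if and only if $|C_j|=|2|_n$ for every $j$ (there are $r$ summands, each bounded by $|2|_n$). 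Comparing with the defining relation \eqref{6}, the number $n$ is overpseudoprime to base $2$ exactly when this inequality is an equality, i.e. exactly when every cyclotomic coset of $2$ modulo $n$ has the maximal possible size $|2|_n$.

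It then remains to translate the statement about coset sizes into the divisor condition \eqref{8}, and this last step is the one requiring the most care. As $s$ ranges over $\{1,\dots,n-1\}$, the greatest common divisor $\gcd(n,s)$ takes every value that is a proper divisor of $n$ (the value $e$ being attained at $s=e$), so $d=n/\gcd(n,s)$ ranges over precisely the divisors $d>1$ of $n$. Consequently the condition ``$|C_s|=|2|_{n/\gcd(n,s)}=|2|_n$ for all $s$'' is equivalent to ``$|2|_d=|2|_n$ for every divisor $d>1$ of $n$'', which is exactly \eqref{8}.

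The only genuinely nonroutine point is verifying this surjectivity of $s\mapsto n/\gcd(n,s)$ onto the divisors exceeding $1$, which guarantees that the coset-size condition and the divisor condition constrain the same set of orders and that no divisor is overlooked; everything else is the bookkeeping of the two-way count. I would therefore spell out that correspondence explicitly and keep the forward and backward implications of the equality case clearly separated, since the ``only if'' direction relies on all $r$ coset sizes being forced to the common maximum $|2|_n$.
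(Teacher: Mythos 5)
Your proof is correct and follows essentially the same route as the paper's: identifying each coset size $|C_s|$ with $|2|_{n/\gcd(n,s)}$ and observing that $n/\gcd(n,s)$ ranges over exactly the divisors $d>1$ of $n$. The only difference is that you make explicit the counting step ($n-1=\sum_{j}|C_j|\le r(n)\,|2|_n$, with equality precisely when every coset attains the maximal size $|2|_n$), which the paper leaves implicit when it passes from the defining equation to the constancy of $|C_s|$.
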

\begin{proof}
Let $s$, different from zero, be an arbitrary element of $\mathbb{Z}_{n}$. Take $u_{s}=\gcd( n, s) $ and $v_{s}=%
\dfrac{n}{u_{s}}$. Then $s=au_{s}$, for some integer $a$ relatively prime with $%
n$. As we said before, $\left\vert C_{s}\right\vert =\left\vert
2\right\vert _{v_{s}}$.

Note that when $s$ runs through a set of coset representatives
modulo $n$, $v_s$ runs through the set of divisors of $n$. So the
value of $\left\vert C_{s}\right\vert $ is constant if and only if
$\left\vert 2\right\vert _{d}$ is invariant for each divisor $d>1$
of $n$, which proves the theorem.
\end{proof}

A direct consequence of the last theorem is the following.

\begin{corollary}\label{cor3} Two overpseudoprimes to base $2$,
 $N_1$ and $N_2$ such that $|2|_{N_1}\neq|2|_{N_2}$, are relatively primes.
 \end{corollary}

\begin{corollary}\label{cor2}
For a prime $p$, $M_p=2^p-1$ is either a prime or an overpseudoprime
to base $2$.
 \end{corollary}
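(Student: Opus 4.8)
The plan is to invoke Theorem~\ref{t2}. Writing $n=M_p=2^p-1$, that theorem tells us $n$ is overpseudoprime to base $2$ precisely when $|2|_d=|2|_n$ for every divisor $d>1$ of $n$. So it suffices to prove the following dichotomy: if $n$ is not prime, then the order of $2$ modulo $n$ coincides with the order of $2$ modulo each of its nontrivial divisors.

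First I would pin down $|2|_n$. Since $2^p=n+1\equiv 1\pmod n$, the order $|2|_n$ divides $p$; as $p$ is prime, $|2|_n\in\{1,p\}$. The value $1$ is impossible, for it would force $n\mid(2-1)=1$, contradicting $n>1$. Hence $|2|_n=p$. Next, for an arbitrary divisor $d>1$ of $n$, the congruence $2^p\equiv 1\pmod n$ descends to $2^p\equiv 1\pmod d$, so $|2|_d$ again divides the prime $p$ and cannot equal $1$ (else $d\mid 1$). Therefore $|2|_d=p=|2|_n$ for every such $d$. By Theorem~\ref{t2}, $n$ is overpseudoprime to base $2$ whenever it is composite, which is exactly the asserted dichotomy.

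I expect no genuine obstacle here; the single point deserving care is the hypothesis that the exponent $p$ is prime. It is precisely this primality that collapses every order dividing $p$ down to the two possibilities $1$ and $p$, and rules out the former. Were $p$ composite, a divisor of $2^p-1$ could carry a proper nontrivial order dividing $p$, breaking the required invariance and hence the overpseudoprimality.
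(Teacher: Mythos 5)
Your argument is correct and follows exactly the paper's route: the paper also observes that for any divisor $d>1$ of $M_p$ one has $|2|_d \mid p$, hence $|2|_d = p$ by primality of $p$, and then applies Theorem~\ref{t2}. You simply spell out the details (ruling out order $1$, identifying $|2|_{M_p}=p$) that the paper leaves implicit.
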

\begin{proof}
Assume that $M_p$ is not prime. Let $d>1$ be any divisor of $M_p$.
Then $|2|_d$ divides $p$ and thus $|2|_d=p$.
\end{proof}

\begin{corollary}\label{cor5} Every overpseudoprime to base $2$ is a Super-Poulet pseudoprime.
\end{corollary}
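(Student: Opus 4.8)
The plan is to use Theorem~\ref{t2} to show that the invariant-order property is hereditary: every divisor $d>1$ of an overpseudoprime $n$ is itself either prime or an overpseudoprime to base $2$. Once this is established, the Super-Poulet condition falls out almost immediately.

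First I would recall, as already observed after the definition, that an overpseudoprime $n$ is in particular a Poulet number, so $n$ is composite and $2^{n-1}\equiv 1 \pmod n$. It then remains to verify the extra requirement defining a Super-Poulet pseudoprime, namely that $d\mid 2^d-2$ for every divisor $d$ of $n$. The case $d=1$ is trivial since $2^1-2=0$, so I fix a divisor $d>1$, noting that $d$ is odd because $n$ is odd.

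The key step is the heredity argument. Every divisor $d'>1$ of $d$ is also a divisor $>1$ of $n$, so Theorem~\ref{t2} applied to $n$ gives $|2|_{d'}=|2|_n$ for all such $d'$; in particular all these orders coincide and equal $|2|_d$. Hence either $d$ is prime, or $d$ is composite and meets the hypothesis of Theorem~\ref{t2}, which forces $d$ to be an overpseudoprime to base $2$. In the first case, Fermat's little theorem, equivalently \eqref{5}, yields $|2|_d\mid d-1$; in the second case the defining relation \eqref{6} for $d$ gives $d-1=r(d)\,|2|_d$, so again $|2|_d\mid d-1$.

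Finally I would translate this into the desired divisibility: since $d$ is odd and $|2|_d\mid d-1$, we obtain $2^{d-1}\equiv 1\pmod d$, that is $d\mid 2^d-2$. As this holds for every divisor $d$ of $n$ and $n$ is a Poulet number, $n$ is a Super-Poulet pseudoprime. I expect the only genuine subtlety to be the heredity step: one must observe that the invariance of $|2|_{d'}$ across the divisors of $n$ restricts to the divisors of $d$, so that $d$ itself qualifies for Theorem~\ref{t2}; the remaining manipulations are routine.
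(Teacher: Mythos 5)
Your proof is correct and follows essentially the same route as the paper: apply Theorem~\ref{t2} to conclude that every divisor $d>1$ of $n$ is either prime or itself an overpseudoprime, and in either case deduce $2^{d-1}\equiv 1\pmod d$. You simply spell out the heredity step (that the invariance of $|2|_{d'}$ over divisors of $n$ restricts to divisors of $d$) which the paper leaves implicit.
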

\begin{proof}
Let $n$ be an overpseudoprime to base $2$ and take $d$ an arbitrary
divisor of $n$. By Theorem \ref{t2}, $d$ is either prime or
overpseudoprime to base $2$. In any case, we have $2^{d-1}\equiv 1
\pmod {d}$.
\end{proof}

\begin{example} Consider the super-Poulet pseudoprime, see
\seqnum{A178997} in \cite{SEQ}, $96916279 = 167 \cdot 499 \cdot
1163$. We know that, cf. \seqnum{A002326} in \cite{SEQ},  $
|2|_{167}=83, \; |2|_{499}=166$ and $|2|_{1163} =166$. Thus the
reciprocal of the above corollary is not true.
\end{example}

Assume that $p_1$ and $p_2$ are primes such that $|2|_{p_1}=
|2|_{p_2}$. Then $|2|_{p_1p_2}= \lcm(|2|_{p_1},|2|_{p_2})$. In
consequence, $n=p_1p_2$ is an overpseudoprime to base $2$. With the
same objective, we get the following.
\begin{theorem}\label{t3}Let $p_1,\ldots,p_k$ be different primes such that $|2|_{p_i}=|2|_{p_j}$, when $i\neq j$.
Assume that $p_i^{l_i}$ is an overpseudoprime to base $2$, where
$l_i$ are positive integers, for each $i=1,\ldots,k$. Then
$n=p_1^{l_1}\cdots p_k^{l_k}$ is an overpseudoprime to base $2$.
\end{theorem}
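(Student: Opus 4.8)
The plan is to reduce everything to the characterization in Theorem~\ref{t2}: I must show that $|2|_d = |2|_n$ for every divisor $d>1$ of $n$. Write $t = |2|_{p_1} = \cdots = |2|_{p_k}$ for the common order. The first step is to unpack what the hypothesis ``$p_i^{l_i}$ is overpseudoprime'' gives me. Applying Theorem~\ref{t2} to $p_i^{l_i}$, whose divisors greater than $1$ are exactly $p_i, p_i^2, \ldots, p_i^{l_i}$, I obtain $|2|_{p_i^{a}} = |2|_{p_i^{l_i}} = |2|_{p_i} = t$ for every exponent $1 \le a \le l_i$. Thus the order of $2$ modulo every prime-power divisor of $n$ equals the single value $t$.

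The second step is to compute $|2|_d$ for a general divisor $d = p_1^{a_1}\cdots p_k^{a_k} > 1$, where $0 \le a_i \le l_i$. Since the factors $p_i^{a_i}$ are pairwise coprime, the Chinese Remainder Theorem yields $|2|_d = \lcm_{i:\, a_i>0} |2|_{p_i^{a_i}}$; this is simply the multi-factor version of the relation $|2|_{p_1p_2} = \lcm(|2|_{p_1},|2|_{p_2})$ recorded just before the statement. By the first step every term appearing in this least common multiple equals $t$, so $|2|_d = \lcm(t,\ldots,t) = t$.

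Applying this in particular to $d = n$ gives $|2|_n = t$, and therefore $|2|_d = t = |2|_n$ for every divisor $d>1$ of $n$. By Theorem~\ref{t2} this is precisely the condition for $n$ to be overpseudoprime to base $2$, which completes the argument.

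The argument is short, and the only point requiring care is the coprime-lcm identity for the order of $2$ modulo a product. This is where the hypothesis that all the $p_i$ share the same order $t$ does its real work: it forces every least common multiple that arises to collapse to the single value $t$ rather than to grow, which is exactly what the invariance condition of Theorem~\ref{t2} demands. I would state the lcm identity as a standard consequence of the Chinese Remainder Theorem, so I expect no genuine obstacle beyond phrasing that identity cleanly.
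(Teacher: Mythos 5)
Your proof is correct and follows exactly the route the paper intends: the paper states Theorem~\ref{t3} without an explicit proof, but the paragraph immediately preceding it sketches this very argument (the coprime-lcm identity for multiplicative orders combined with the divisor characterization of Theorem~\ref{t2}) in the case of two primes. Your write-up simply carries out the evident generalization to $k$ prime powers, so it matches the paper's approach.
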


\section[Wieferich primes]{The $(w+1)$-th power of Wieferich prime of order $w$ is overpseudoprime to base $2$}
Knauer and Richstein \cite{KR05}, proved that  $1093$ and $3511$ are
the only Wieferich primes less than $1.25\times 10^{15}$. More
recently, Dorais and Klyve \cite{DK11} extend this interval to $6.7
× 10^{15}$.

We say that a prime $p$ is a Wieferich prime of order $w\geq 1$, if
$\nu_p(2^{p-1}-1)=w+1$.

The following result, from Nathanson \cite[Thm.\ 3.6]{Nathanson},
give us a method to calculate $|b|_{p^t}$ from $|b|_p$.

\begin{theorem}
\label{poten}Let $p$ be an odd prime not divisor of $b$, $m=\nu_{p}%
(b^{\left\vert b\right\vert _{p}}-1)$ and $t$  a positive integer,
then
\[
\left\vert b\right\vert _{p^{t}}=%
\begin{cases}
\left\vert b\right\vert _{p}, & \text{ if }t\leq m;\\
& \\
p^{t-m}\left\vert b\right\vert _{p}, & \text{ if \ }t>m.
\end{cases}
\]

\end{theorem}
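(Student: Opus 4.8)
The plan is to reduce the entire statement to a single $p$-adic valuation identity. Writing $d=\left\vert b\right\vert _{p}$ for brevity, I would first record the elementary observation that any exponent $N$ with $p^{t}\mid b^{N}-1$ is automatically a multiple of $d$: reducing modulo $p$ gives $b^{N}\equiv 1\pmod{p}$, so $d=\left\vert b\right\vert _{p}$ divides $N$. Consequently $\left\vert b\right\vert _{p^{t}}=d\,k_{0}$, where $k_{0}$ is the least positive integer for which $p^{t}\mid b^{dk_{0}}-1$, equivalently for which $\nu_{p}(b^{dk_{0}}-1)\geq t$. Note also that $m=\nu_{p}(b^{d}-1)\geq 1$, since $d$ being the order forces $p\mid b^{d}-1$.

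The heart of the argument is then the claim that
\[
\nu_{p}(b^{dk}-1)=m+\nu_{p}(k)\qquad\text{for every }k\geq 1.
\]
Granting this, everything follows cleanly. The minimality condition $m+\nu_{p}(k_{0})\geq t$ becomes $\nu_{p}(k_{0})\geq t-m$. When $t\leq m$ this already holds for $k_{0}=1$, so $\left\vert b\right\vert _{p^{t}}=d=\left\vert b\right\vert _{p}$; when $t>m$ the least positive $k_{0}$ with $\nu_{p}(k_{0})\geq t-m$ is $k_{0}=p^{t-m}$, giving $\left\vert b\right\vert _{p^{t}}=p^{t-m}d$. This is precisely the two-case formula in the statement.

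To prove the valuation identity I would argue by induction on $\nu_{p}(k)$, the engine being the passage from exponent $k$ to $pk$. Writing $b^{d}=1+p^{m}u$ with $p\nmid u$, a binomial expansion of $(1+p^{m}u)^{p}$ gives
\[
b^{dp}=1+p^{m+1}u+\binom{p}{2}p^{2m}u^{2}+\cdots .
\]
Here the crucial point is that $p$ is odd: the coefficient $\binom{p}{2}=\tfrac{p(p-1)}{2}$ is a $p$-integer carrying an extra factor of $p$, so the term $\binom{p}{2}p^{2m}u^{2}$ has valuation at least $1+2m>m+1$ (using $m\geq 1$), and all later terms have even larger valuation. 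Hence $\nu_{p}(b^{dp}-1)=m+1$, and iterating yields $\nu_{p}(b^{dp^{j}}-1)=m+j$. For a general $k=p^{j}e$ with $p\nmid e$, I would expand $(b^{dp^{j}})^{e}-1=(1+p^{m+j}w)^{e}-1$ with $p\nmid w$; the linear term $e\,p^{m+j}w$ has valuation exactly $m+j$ while every other term is strictly more divisible, giving $\nu_{p}(b^{dk}-1)=m+\nu_{p}(k)$.

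The main obstacle is the valuation identity itself, and within it the careful control of the binomial tail: one must verify that every term beyond the linear one is strictly more $p$-divisible than the linear term, which is exactly where the hypothesis that $p$ is \emph{odd} is indispensable (for $p=2$ the extra factor from $\binom{p}{2}$ disappears and the formula changes). This identity is, in essence, the Lifting-the-Exponent lemma; if one is content to invoke it, the identity is immediate and the proof collapses to the short case analysis of the second paragraph, but the genuine work lies in establishing it from scratch as above.
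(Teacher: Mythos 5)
Your proof is correct. Note, however, that the paper does not prove this statement at all: it is quoted verbatim from Nathanson \cite[Thm.\ 3.6]{Nathanson} as a known result, so there is no in-paper argument to compare against. What you have written is a complete, self-contained derivation via the standard lifting-the-exponent identity $\nu_{p}(b^{dk}-1)=m+\nu_{p}(k)$, and all the delicate points are handled properly: the term $\binom{p}{2}p^{2m}u^{2}$ has valuation at least $1+2m>m+1$ precisely because $p$ is odd and $m\geq 1$, the last binomial term $p^{pm}u^{p}$ has valuation $pm\geq 3m>m+1$, and the reduction of $\left\vert b\right\vert_{p^{t}}$ to the least $k_{0}$ with $\nu_{p}(k_{0})\geq t-m$ is clean because every $N$ with $b^{N}\equiv 1\pmod{p^{t}}$ is forced to be a multiple of $d=\left\vert b\right\vert_{p}$. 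This is essentially the textbook proof one would find behind the citation; your version buys the reader a self-contained paper at the cost of a page of elementary valuation bookkeeping, whereas the authors chose to outsource exactly that bookkeeping.
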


\begin{theorem}\label{t4}
A prime $p$ is a Wieferich prime of order greater than or equal to
$w$ if and only if $p^{w+1}$ is an overpseudoprime to base $2$.
\end{theorem}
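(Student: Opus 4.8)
The plan is to read off the overpseudoprimality of $n=p^{w+1}$ from the full criterion of Theorem~\ref{t2} and then match it against the Wieferich condition through Nathanson's formula in Theorem~\ref{poten}. The divisors $d>1$ of $p^{w+1}$ are exactly the powers $p,p^2,\dots,p^{w+1}$, so by Theorem~\ref{t2} the number $p^{w+1}$ is overpseudoprime to base $2$ if and only if $|2|_{p^t}$ is the same for every $t=1,\dots,w+1$. Theorem~\ref{poten} (with $b=2$ and $m=\nu_p(2^{|2|_p}-1)$) tells us that $t\mapsto|2|_{p^t}$ is constant, equal to $|2|_p$, for $t\le m$ and strictly increasing thereafter; hence this sequence is constant on $1\le t\le w+1$ precisely when $|2|_{p^{w+1}}=|2|_p$. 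So the first step reduces the whole criterion \eqref{8} to the single equality $|2|_{p^{w+1}}=|2|_p$.

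The second step is to recognize this equality as the divisibility $p^{w+1}\mid 2^{p-1}-1$. Since $|2|_p$ always divides $p-1$, the equality $|2|_{p^{w+1}}=|2|_p$ immediately gives $|2|_{p^{w+1}}\mid p-1$, i.e.\ $2^{p-1}\equiv1\pmod{p^{w+1}}$. For the converse I would use Theorem~\ref{poten} once more: it gives $|2|_{p^{w+1}}=p^{c}\,|2|_p$ with $c=\max(0,\,w+1-m)$, and because $\gcd(p,p-1)=1$ a multiple of $p$ cannot divide $p-1$; therefore $|2|_{p^{w+1}}\mid p-1$ forces $c=0$, that is $|2|_{p^{w+1}}=|2|_p$. (Equivalently, one can identify $m$ with $\nu_p(2^{p-1}-1)$ directly: writing $p-1=|2|_p\,k$ with $p\nmid k$ and applying the lifting-the-exponent lemma to $(2^{|2|_p})^k-1$ yields $\nu_p(2^{p-1}-1)=m+\nu_p(k)=m$.)

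Combining the two steps, $p^{w+1}$ is overpseudoprime to base $2$ if and only if $2^{p-1}\equiv1\pmod{p^{w+1}}$, i.e.\ $\nu_p(2^{p-1}-1)\ge w+1$. By the definition of a Wieferich prime of order $w$ (namely $\nu_p(2^{p-1}-1)=w+1$), this last inequality says exactly that $p$ is a Wieferich prime of order at least $w$, which finishes the argument.

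I expect the only delicate point to be the converse in the second step: one has to be sure that no genuine power of $p$ can hide inside $|2|_{p^{w+1}}$ while the order still divides $p-1$. This is precisely what Nathanson's formula controls, since it pins down the exact power of $p$ appearing in $|2|_{p^{w+1}}$; the coprimality of $p$ and $p-1$ then does the rest. Everything else is bookkeeping with the monotonicity of $t\mapsto|2|_{p^t}$ and the translation between orders, divisibilities, and $p$-adic valuations.
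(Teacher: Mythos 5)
Your proof is correct and follows essentially the same route as the paper's: both reduce overpseudoprimality of $p^{w+1}$ via Theorem~\ref{t2} to the equality $|2|_{p^{w+1}}=|2|_p$ and then use Nathanson's formula (Theorem~\ref{poten}) to write $|2|_{p^{w+1}}=p^r|2|_p$ and force $r=0$ from $|2|_{p^{w+1}}\mid p-1$. You are somewhat more thorough than the paper, which dismisses the converse direction with ``the reciprocal is clear,'' whereas you spell out both implications and the identification of $m$ with $\nu_p(2^{p-1}-1)$.
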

\begin{proof}
Suppose that $p$ is a Wieferich prime of order greater than or equal
to $w$. Then $p^{w+1}\mid 2^{p-1}-1$ and thus $\left\vert
2\right\vert _{p^{w+1}}$ is a divisor of $p-1$.

By Theorem \ref{poten}, $\left\vert 2\right\vert _{p^{w+1}}=p^{r}$ $%
\left\vert 2\right\vert _{p}$ for some non-negative integer $r$. So,
$r=0$. Therefore, $p^{w+1}$ is an overpseudoprime to base $2$.  The
reciprocal is clear.

\end{proof}

\begin{theorem}\label{t5}
Let $n$ be an overpseudoprime to base $2$. If $n$ is not the
multiple of the square of a Wieferich prime, then $n$ is squarefree.
\end{theorem}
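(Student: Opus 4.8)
The plan is to prove the contrapositive: assuming $n$ is an overpseudoprime to base $2$ that is \emph{not} squarefree, I will exhibit a Wieferich prime whose square divides $n$. So suppose some prime $p$ satisfies $p^{2}\mid n$; since $n$ is odd, $p$ is odd as well. The key observation is that both $p$ and $p^{2}$ are divisors greater than $1$ of the overpseudoprime $n$. Hence, applying Theorem \ref{t2} to $n$, the order $|2|_{d}$ is invariant over all such divisors, so in particular $|2|_{p}=|2|_{n}=|2|_{p^{2}}$, and therefore $|2|_{p^{2}}=|2|_{p}$.

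Next I would upgrade this equality to the statement that $p^{2}$ is itself an overpseudoprime to base $2$. The only divisors of $p^{2}$ exceeding $1$ are $p$ and $p^{2}$, and we have just shown $|2|_{p^{2}}=|2|_{p}$; since $p^{2}$ is an odd composite number, Theorem \ref{t2} read in the reverse direction shows that $p^{2}$ is an overpseudoprime to base $2$. Then Theorem \ref{t4} with $w=1$ states that $p^{2}$ is an overpseudoprime to base $2$ if and only if $p$ is a Wieferich prime of order greater than or equal to $1$, i.e.\ a Wieferich prime. Consequently $p$ is a Wieferich prime and $n$ is a multiple of $p^{2}$, the square of a Wieferich prime, which establishes the contrapositive and hence the theorem.

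I expect the only genuine content to be the first reduction, namely recognizing that the overpseudoprimality of $n$ forces $|2|_{p^{2}}=|2|_{p}$, and that this single equality is \emph{exactly} the overpseudoprimality criterion for $p^{2}$ in Theorem \ref{t2}; once that bridge is in place, Theorem \ref{t4} closes the argument immediately. The remaining points are routine bookkeeping: that $p^{2}$ qualifies as a candidate overpseudoprime because $n$ (and hence $p$) is odd and $p^{2}$ is composite, and that the case $p^{2}=n$ causes no difficulty since the same chain of implications applies verbatim. Thus no independent order computation is needed, as the Nathanson-type analysis underlying Theorem \ref{poten} has already been absorbed into Theorem \ref{t4}.
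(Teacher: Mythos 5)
Your proof is correct. It differs from the paper's in one substantive way: having extracted $|2|_{p^{2}}=|2|_{p}$ from Theorem \ref{t2} applied to $n$, you observe that this single equality already makes $p^{2}$ itself an overpseudoprime to base $2$ (again by Theorem \ref{t2}, since $p$ and $p^{2}$ are its only divisors exceeding $1$), and then invoke Theorem \ref{t4} with $w=1$ to conclude that $p$ is a Wieferich prime. The paper instead argues in the opposite contrapositive direction and does the order computation by hand: assuming $p_{1}^{2}\mid n$ with $p_{1}$ \emph{not} a Wieferich prime, it notes that $|2|_{p_{1}^{2}}$ divides $p_{1}(p_{1}-1)$ but not $p_{1}-1$, hence $|2|_{p_{1}^{2}}\geq p_{1}>p_{1}-1\geq|2|_{p_{1}}$, so the order is not invariant and Theorem \ref{t2} rules out overpseudoprimality of $n$. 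Your route buys brevity by reusing the machinery of Section 3, but note that it leans on the ``reciprocal'' direction of Theorem \ref{t4} (overpseudoprimality of $p^{w+1}$ implies Wieferich of order $\geq w$), which the paper only asserts as clear and which ultimately rests on Nathanson's Theorem \ref{poten}; the paper's argument is self-contained at this point, needing nothing beyond Euler's theorem and the definition of a Wieferich prime. Both are valid proofs of the statement.
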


\begin{proof}
Let $n=p_1^{l_1} \ldots p_k^{l_k}$ and, say, $l_1\geq 2$. If $p_1$
is not a Wieferich prime, then $|2|_{p_1^2}$ divides $p_1(p_1-1)$
but does not divide $p_1-1$. Thus, $|2|_{p_1^2}\geq p_1$. Since
$|2|_{p_1}\leq p_1-1$, then $|2|_{p_1^2} > |2|_{p_1}$ and by Theorem
\ref{t2}, $n$ is not an overpseudoprime to base $2$.
\end{proof}

\section[Overpseudoprime to base b]{Overpseudoprime to base $b$}
Take $b$ a positive integer greater than $1$. Denote by $r=r_b(n)$
the number of cyclotomic cosets of $b$ modulo $n$. If
$C_1,\ldots,C_r$ are the different cyclotomic cosets of $b$ modulo
$n$, then $C_{j_1} \cap C_{j_2}=\varnothing , \;\; j_1\neq j_2$ and
$\bigcup^r_{j=1}C_j=\{1,2,\ldots, n-1\}$.

 Let $p$ be a prime which does not divide $b(b-1)$. Once again, we get
$r_b(p)|b|_p=p-1.$

\begin{definition}
We say that a composite number $n$, relatively prime to $b$, is an
overpseudoprime to base $b$, if it satisfies
\begin{equation}\label{16}
n=r_b(n)|b|_n+1.
\end{equation}
\end{definition}

The proof of the next theorem follows similarly as in Theorem
\ref{t2}.
\begin{theorem}\label{t7}
Let $n$ be a composite number such that $\gcd(n,b)=1$. Then $n$ is
an overpseudoprime to base $b$ if and only if  $|b|_{n}=|b|_{d}$,
for each divisor $d>1$ of $n$.
\end{theorem}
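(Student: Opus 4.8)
The plan is to follow the scheme of the proof of Theorem~\ref{t2} almost verbatim, replacing $2$ by $b$ throughout; here the hypothesis $\gcd(n,b)=1$ plays the role that oddness of $n$ played there, guaranteeing that multiplication by $b$ is a bijection of $\mathbb{Z}_n$ and hence that the cyclotomic cosets of $b$ are well-defined orbits.

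First I would fix the combinatorial dictionary. For a nonzero $s\in\mathbb{Z}_n$ put $u_s=\gcd(n,s)$ and $v_s=n/u_s$; writing $s=au_s$ with $\gcd(a,n)=1$, the orbit $C_s$ of $s$ under multiplication by $b$ has cardinality the least $m$ with $b^m s\equiv s\pmod n$, and since $\gcd(b,n)=1$ this least $m$ equals $|b|_{v_s}$. Thus $|C_s|=|b|_{v_s}$, exactly as in the base-$2$ case. As $s$ ranges over $\{1,\dots,n-1\}$, the divisor $u_s$ ranges over all divisors of $n$ strictly smaller than $n$, so $v_s$ ranges over all divisors $d>1$ of $n$; consequently the multiset of coset sizes is $\{\,|b|_d : d\mid n,\ d>1\,\}$.

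The heart of the argument is an extremal counting step, which the proof of Theorem~\ref{t2} leaves implicit. Since $v_s\mid n$ we have $|b|_{v_s}\mid|b|_n$, so every coset satisfies $|C_s|\le|b|_n$, with equality for any unit $s$ (where $v_s=n$). Combining the disjoint-union decomposition of $\{1,\dots,n-1\}$ into $C_1,\dots,C_r$, which gives $\sum_{j=1}^r|C_j|=n-1$, with the definition~\eqref{16}, the number $n$ is overpseudoprime to base $b$ exactly when $n-1=r\,|b|_n$, that is, when $\sum_{j=1}^r|C_j|=r\,|b|_n$. Since each of the $r$ summands is at most $|b|_n$, this forces $|C_j|=|b|_n$ for every $j$; conversely, if all $r$ cosets share the common size $|b|_n$ then $n-1=r\,|b|_n$ and \eqref{16} holds. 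Hence being overpseudoprime to base $b$ is equivalent to all cyclotomic cosets having the same cardinality.

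Finally I would translate ``equal coset sizes'' back into the divisor condition using the dictionary above: the coset sizes $\{\,|b|_d : d\mid n,\ d>1\,\}$ are all equal precisely when they all equal their maximum $|b|_n$ (attained at $d=n$), i.e.\ when $|b|_d=|b|_n$ for every divisor $d>1$ of $n$, which is the asserted equivalence. I expect the only point requiring genuine care---though it is routine---to be the extremal step, where one must use both the divisibility $|b|_{v_s}\mid|b|_n$ and the disjointness of the cosets to upgrade the single scalar equation~\eqref{16} into the statement that every coset is maximal.
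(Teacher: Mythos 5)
Your proposal is correct and follows the same route the paper intends: the paper simply says the proof "follows similarly as in Theorem \ref{t2}", i.e., via the dictionary $|C_s|=|b|_{v_s}$ with $v_s$ ranging over the divisors $d>1$ of $n$. Your explicit extremal counting step (each coset size divides, hence is at most, $|b|_n$, so $n-1=r\,|b|_n$ forces all cosets to be maximal) is exactly the argument the paper leaves implicit, and it is carried out correctly.
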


\begin{definition}
A prime $p$ is said a Wieferich prime in base $b$ if $ b^{p-1}\equiv
1\pmod {p^2}$. A Wieferich prime to base $b$ is of order $w\geq 1$,
if $\nu_p(b^{p-1}-1)=w+1$.
\end{definition}

With this definition in our hands, we can generalize Theorems
\ref{t4} and \ref{t5}. The respective proofs, are similar to that
ones.
\begin{theorem}
\label{t10} A prime $p$ is a Wieferich prime in base $b$ of order
greater than or equal to $w$ if and only if $p^{w+1}$ is an
overpseudoprime to base $b$.
\end{theorem}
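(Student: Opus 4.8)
The plan is to follow the template of the proof of Theorem~\ref{t4}, replacing base $2$ by base $b$ throughout, and to lean on Theorem~\ref{poten} of Nathanson together with the characterization in Theorem~\ref{t7}. For the forward implication I would assume that $p$ is a Wieferich prime in base $b$ of order at least $w$, so that $\nu_{p}(b^{p-1}-1)\geq w+1$, i.e. $p^{w+1}\mid b^{p-1}-1$. This gives $b^{p-1}\equiv 1 \pmod{p^{w+1}}$, whence $|b|_{p^{w+1}}$ divides $p-1$. Theorem~\ref{poten} writes $|b|_{p^{w+1}}=p^{r}|b|_{p}$ for some integer $r\geq 0$; since $\gcd(p,p-1)=1$, the factor $p^{r}$ cannot divide $p-1$ unless $r=0$, so $|b|_{p^{w+1}}=|b|_{p}$.

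Next I would observe that the divisors $d>1$ of $n=p^{w+1}$ are exactly $p,p^{2},\ldots,p^{w+1}$, and that Theorem~\ref{poten} makes the map $j\mapsto |b|_{p^{j}}$ nondecreasing. Squeezing $|b|_{p}\leq |b|_{p^{j}}\leq |b|_{p^{w+1}}=|b|_{p}$ for $1\leq j\leq w+1$ then forces $|b|_{d}=|b|_{p^{w+1}}$ for every such $d$, and Theorem~\ref{t7} certifies that $p^{w+1}$ is an overpseudoprime to base $b$.

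For the converse I would start from the hypothesis that $p^{w+1}$ is an overpseudoprime to base $b$ and apply Theorem~\ref{t7} to the divisor $d=p$, obtaining $|b|_{p^{w+1}}=|b|_{p}$. Because $|b|_{p}$ divides $p-1$, raising the congruence $b^{|b|_{p}}\equiv 1 \pmod{p^{w+1}}$ to the power $(p-1)/|b|_{p}$ yields $b^{p-1}\equiv 1 \pmod{p^{w+1}}$, that is $\nu_{p}(b^{p-1}-1)\geq w+1$; hence $p$ is a Wieferich prime in base $b$ of order at least $w$.

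The one point deserving care---and the step I would treat as the crux---is the reduction of the overpseudoprime condition, which a priori constrains $|b|_{d}$ over \emph{all} divisors $d>1$ of $p^{w+1}$, to the single equality $|b|_{p^{w+1}}=|b|_{p}$. This reduction rests entirely on the monotonicity of $j\mapsto |b|_{p^{j}}$ furnished by Theorem~\ref{poten}; once that monotonicity is in place the verification becomes routine. I would also note that the forward direction deliberately avoids computing the threshold $m=\nu_{p}(b^{|b|_{p}}-1)$ exactly, precisely as in the proof of Theorem~\ref{t4}: the coprimality $\gcd(p,p-1)=1$ already forces the exponent $r$ in $|b|_{p^{w+1}}=p^{r}|b|_{p}$ to vanish, which is all that is needed.
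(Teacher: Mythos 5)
Your proof is correct and takes essentially the same route as the paper, which simply declares Theorem~\ref{t10} to follow as in Theorem~\ref{t4}: use Theorem~\ref{poten} to write $|b|_{p^{w+1}}=p^{r}|b|_{p}$ and force $r=0$ from $|b|_{p^{w+1}}\mid p-1$, then invoke the divisor characterization of overpseudoprimes. You are in fact slightly more complete than the paper, since you explicitly use the monotonicity of $j\mapsto|b|_{p^{j}}$ to cover the intermediate divisors $p^{2},\ldots,p^{w}$ and you spell out the converse that the paper dismisses as clear.
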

\begin{theorem}\label{t11}
If $n$ is overpseudoprime to base $b$ and is not a multiple of a
square of a Wieferich prime to base $b$, then $n$ is squarefree.

\end{theorem}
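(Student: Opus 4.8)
The plan is to mimic the proof of Theorem~\ref{t5} verbatim, replacing base~$2$ by base~$b$ throughout, since Theorem~\ref{t11} is nothing more than the base-$b$ analogue of Theorem~\ref{t5} and all the ingredients have already been generalized. Writing $n=p_1^{l_1}\cdots p_k^{l_k}$, I would argue by contraposition: suppose $n$ is not squarefree, so some exponent, say $l_1$, satisfies $l_1\geq 2$; I will show that either $p_1$ is a Wieferich prime to base $b$ (so that $n$ is a multiple of the square of such a prime, the excluded case), or else $n$ fails to be overpseudoprime to base~$b$.

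First I would invoke Theorem~\ref{t7}, which says that $n$ is overpseudoprime to base~$b$ precisely when $|b|_n=|b|_d$ for every divisor $d>1$ of $n$. Since $p_1^2$ divides $n$, it suffices to exhibit a strict inequality $|b|_{p_1^2}>|b|_{p_1}$ in order to conclude that $n$ is not overpseudoprime. To produce this inequality I would use Theorem~\ref{poten} (Nathanson's result) with the prime $p_1$ and the base $b$: it gives $|b|_{p_1^2}=p_1^{\,r}\,|b|_{p_1}$ for some nonnegative integer $r$, with $r=0$ exactly when $p_1^2\mid b^{|b|_{p_1}}-1$, i.e.\ exactly when $p_1$ is a Wieferich prime to base~$b$. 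Equivalently, $|b|_{p_1^2}$ divides $p_1(p_1-1)$, and it divides $p_1-1$ if and only if $p_1$ is Wieferich to base~$b$.

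Assembling these, if $p_1$ is \emph{not} a Wieferich prime to base~$b$, then $|b|_{p_1^2}$ does not divide $p_1-1$, so $|b|_{p_1^2}\geq p_1$; combined with $|b|_{p_1}\leq p_1-1$ this yields $|b|_{p_1^2}>|b|_{p_1}$, and Theorem~\ref{t7} forces $n$ not to be overpseudoprime to base~$b$. Hence, under the hypotheses of the theorem (that $n$ \emph{is} overpseudoprime to base~$b$ and is \emph{not} a multiple of the square of a Wieferich prime to base~$b$), no repeated prime factor can occur, so $n$ is squarefree.

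I do not anticipate a genuine obstacle, since every tool is already in place for base~$b$: Theorem~\ref{poten} is stated for an arbitrary odd prime $p\nmid b$, and Theorem~\ref{t7} together with the base-$b$ Wieferich definition have already been established. The only point requiring a little care is the hypothesis of Theorem~\ref{poten} that $p\nmid b$; this is guaranteed here because $\gcd(n,b)=1$ is part of the definition of overpseudoprime to base~$b$, so in particular $p_1\nmid b$. With that observed, the argument is a direct transcription of the base-$2$ case.
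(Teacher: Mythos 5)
Your proposal is correct and is exactly what the paper intends: the paper gives no separate proof of Theorem~\ref{t11}, stating only that it is proved "similarly" to Theorem~\ref{t5}, and your argument is precisely that base-$2$ proof transcribed to base $b$ (via Theorem~\ref{poten} and Theorem~\ref{t7}), with the added and correct observation that $\gcd(n,b)=1$ guarantees the hypothesis $p_1\nmid b$.
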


Let us remember that an odd composite $N$ such that $N-1=2^{r}s$
with $s$ an odd integer and $\left(  b,\ N\right)  =1$, is a strong
pseudoprime to base $b$~ if either $b^{s}\equiv 1\ \pmod N$ or
$b^{2^{i}s}\equiv-1\ \pmod N$, for some $0\leq i<r$. The following
result shows us, that the overpseudoprimes do not appear more
frequently than the strong pseudoprimes.

\begin{theorem}\label{t12}
If $n$ is an overpseudoprime to base $b$, then $n$ is a strong
pseudoprime to the same base.
\end{theorem}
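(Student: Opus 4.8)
The plan is to show that an overpseudoprime $n$ to base $b$ satisfies the defining condition of a strong pseudoprime. Write $n-1 = 2^r s$ with $s$ odd and $\gcd(b,n)=1$. By Theorem~\ref{t7}, overpseudoprimality means $|b|_n = |b|_d$ for every divisor $d>1$ of $n$; in particular all prime-power divisors $p_i^{l_i}$ share the common order $\tau := |b|_n$. The strategy is to examine the $2$-adic valuation of $\tau$ and split into the two cases that the strong-pseudoprime definition allows: either $b^s \equiv 1 \pmod n$, or $b^{2^i s} \equiv -1 \pmod n$ for some $0 \le i < r$.

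First I would relate $\tau$ to the factorization $n-1 = 2^r s$. Since $n$ is an overpseudoprime it is a Poulet pseudoprime, so $\tau = |b|_n$ divides $n-1 = 2^r s$. Write $\nu_2(\tau) = e$, so $0 \le e \le r$. The key structural fact I would exploit is that $\tau$ is simultaneously the order of $b$ modulo each prime power $p_i^{l_i}$ dividing $n$. This forces the element $b^{\tau/2}$ (when $e \ge 1$) to be a \emph{common} element of order $2$ modulo every $p_i^{l_i}$, hence $b^{\tau/2} \equiv -1 \pmod{p_i^{l_i}}$ for each $i$ — because $-1$ is the unique element of order $2$ in the cyclic group $(\mathbb{Z}/p_i^{l_i}\mathbb{Z})^\times$. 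By the Chinese Remainder Theorem this yields $b^{\tau/2} \equiv -1 \pmod n$.

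Next I would convert this into the strong-pseudoprime condition. If $e = 0$, i.e. $\tau$ is odd, then $\tau \mid s$, so $b^s \equiv 1 \pmod n$ and the first alternative holds. If $e \ge 1$, set $i = e-1$; since $\tau \mid 2^r s$ and $\nu_2(\tau)=e$, the odd part of $\tau$ divides $s$, and one checks that $\tau/2$ divides $2^i s = 2^{e-1} s$ with $2^i s / (\tau/2)$ odd. Hence $b^{2^i s}$ is an odd power of $b^{\tau/2} \equiv -1 \pmod n$, giving $b^{2^i s} \equiv -1 \pmod n$ with $0 \le i = e-1 < r$ (using $e \le r$). Either way $n$ meets the definition of a strong pseudoprime to base $b$.

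The main obstacle is the step asserting $b^{\tau/2} \equiv -1 \pmod{p_i^{l_i}}$ for \emph{each} prime power, and then patching these together via CRT to get $-1$ modulo $n$ rather than merely some square root of $1$. This is exactly where overpseudoprimality does real work: without the common-order hypothesis the orders modulo different $p_i^{l_i}$ could have different $2$-adic valuations, the half-powers would sit at incompatible residues, and the CRT combination need not be $\pm 1$ at all — which is precisely the phenomenon that distinguishes strong pseudoprimes from ordinary Euler or Fermat pseudoprimes. I would therefore take care to verify that the uniqueness of the order-$2$ element in each cyclic group $(\mathbb{Z}/p_i^{l_i}\mathbb{Z})^\times$ is what guarantees the residues align to $-1$ everywhere, and hence modulo $n$.
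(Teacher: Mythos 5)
Your proof is correct, but it follows a genuinely different route from the paper's. You localize at each prime power $p_i^{l_i}\,\|\,n$, use Theorem~\ref{t7} to see that $b$ has the common order $\tau=|b|_n$ modulo each of them, observe that when $\nu_2(\tau)=e\geq 1$ the element $b^{\tau/2}$ has order $2$ in the cyclic group $(\mathbb{Z}/p_i^{l_i}\mathbb{Z})^\times$ and hence equals $-1$ there, and then glue with the Chinese Remainder Theorem to get $b^{\tau/2}\equiv -1\pmod n$ before raising to the odd power $s/s_1$. The paper instead never invokes cyclicity of $(\mathbb{Z}/p^{l}\mathbb{Z})^\times$ or CRT: it sets $A=b^{s_1}$ with $s_1$ the odd part of $\tau$, factors $A^{2^{t}}-1=(A-1)(A+1)(A^{2}+1)\cdots(A^{2^{t-1}}+1)\equiv 0\pmod n$, and shows directly that every factor except $A^{2^{t-1}}+1$ is coprime to $n$, because a common divisor $d>1$ of $n$ and such a factor would force $|b|_d$ to be a proper divisor of $|b|_n=2^{t}s_1$, contradicting $|b|_d=|b|_n$. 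Both arguments pivot on the same invariance of $|b|_d$; yours is the more standard ``prime-by-prime'' argument and makes the index $i=e-1$ at which $-1$ appears completely explicit, at the cost of importing the existence of primitive roots modulo odd prime powers (and the implicit assumption that $n$ is odd, which the paper also makes tacitly), whereas the paper's factorization trick stays entirely at the level of divisors of $n$ and needs no structure theory of the unit groups.
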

\begin{proof}
Let $n$ be an overpseudoprime to base $b$. Suppose that $n-1=2^{r}s$
and $|b|_{n}=2^{t}s_{1}$, for some odd integer $s$, $s_{1}$ and
nonnegative integers $r$, $t$. Since $n$ is an overpseudoprime, then
$|b|_{n}|n-1$. Thus $t\leq r$ and $s_1$ divides $s$. Assume $t=0$.
So $|b|_{n}$ is a divisor of $s$ and thus
\[
b^{s}\equiv1 \pmod{n}.
\]
Then $n$ is a strong pseudoprime to base $b$.

On the other side, assume that $t\geq1$ and write $A=b^{s_{1}}=b^{\frac{|b|_{n}}{2^{t}}%
}$. Note that
\[
(A-1)(A+1)(A^{2}+1)(A^{2^{2}}+1)\cdots(A^{2^{t-1}}+1)=A^{2^{t}}-1\equiv
0 \pmod{n}.
\]

We claim that for any $i<t-1$ the greatest common divisor $\gcd( n,\
A^{2^{i}}+1)$ is $1$ . Indeed, assume that $d>1$ divides both $n$
and $A^{2^{i}}+1$. Since $n$ is an overpseudoprime to base $b$, we
have $|b|_{d}=|b|_{n}$ and the congruence
$A^{2^{i}}=b^{2^{i}s_{1}}\equiv -1\pmod{d}$, leave us to a
contradiction with the definition of $|b|_{d}$. Thus,
$\gcd(A^{2^{i}}+1,n)=1$. Similarly $\gcd\left(  A-1,\ n\right)  =1$
and we obtain
\[
A^{2^{t-1}}+1\equiv0\pmod{n}.
\]

Consequently, $b^{2^{t-1}s}\equiv-1\pmod{n}$. Therefore, $n$ is a
strong pseudoprime to base $b$.
\end{proof}

Note that there are strong pseudoprimes to base $b$ such that $|b|_{n}%
=2^{t}s_{1}$ and $b^{2^{i}s_{1}}\not \equiv-1\pmod{n}$ for $i<t-1$,
but $n$ is not an overpseudoprime to base $b$. For example
$n=74415361$ and $b=13$.


As before, where we have proved that every overpseudoprime to base
$2$ is super-Poulet pseudoprime, using Theorem \ref{t7} we can prove
the following statement.
\begin{theorem}\label{t13}

Every overpseudoprime $n$ to base $b$ is a superpseudoprime, that is
\begin{equation}\label{20} b^{d-1}\equiv 1\pmod{d},
\end{equation}
for each divisor $d>1$ of $n$.
\end{theorem}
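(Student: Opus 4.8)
The plan is to mimic the argument used for Corollary \ref{cor5}, replacing the base-$2$ characterization (Theorem \ref{t2}) by its base-$b$ analogue (Theorem \ref{t7}). The heart of the matter is to establish that being overpseudoprime is inherited by composite divisors: if $n$ is overpseudoprime to base $b$ and $d>1$ is a divisor of $n$, then $d$ is either prime or itself overpseudoprime to base $b$. Granting this, every divisor $d>1$ of $n$ will satisfy $b^{d-1}\equiv 1\pmod{d}$, either by Fermat's little theorem when $d$ is prime (using that $\gcd(b,d)=1$, which holds since $d\mid n$ and $\gcd(n,b)=1$), or, when $d$ is overpseudoprime, by the defining identity $d-1=r_b(d)\,|b|_d$, which gives $|b|_d\mid d-1$ and hence \eqref{20}.

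First I would fix a composite divisor $d>1$ of $n$ and verify the hereditary property through Theorem \ref{t7}. The key observation is that every divisor $e>1$ of $d$ is also a divisor of $n$. Since $n$ is overpseudoprime to base $b$, Theorem \ref{t7} yields $|b|_e=|b|_n$ for all divisors $e>1$ of $n$, and in particular $|b|_e=|b|_n=|b|_d$ for every divisor $e>1$ of $d$. Thus $|b|_d=|b|_e$ for all divisors $e>1$ of $d$, and applying Theorem \ref{t7} in the converse direction, now to $d$ in place of $n$, shows that $d$ is overpseudoprime to base $b$.

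With the hereditary property in hand I would conclude exactly as above: a prime divisor $d$ satisfies $b^{d-1}\equiv 1\pmod{d}$ by Fermat, while an overpseudoprime divisor $d$ satisfies $|b|_d\mid d-1$ and hence $b^{d-1}\equiv 1\pmod{d}$. In either case \eqref{20} holds, and since $d>1$ was an arbitrary divisor of $n$, the statement follows.

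The only subtle point is the transitivity of order-invariance: one must check that the divisors of $d$ greater than $1$ form a subset of the divisors of $n$ greater than $1$ over which $|b|$ is constant, so that Theorem \ref{t7} may legitimately be applied to $d$ rather than to $n$. This is the crux of the argument, but it is immediate once the inclusion of divisor sets is noted; beyond keeping careful track of which of the two directions of Theorem \ref{t7} is being invoked, no genuine obstacle arises.
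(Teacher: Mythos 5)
Your proposal is correct and follows essentially the same route as the paper, which proves the result by the same argument as Corollary \ref{cor5} with Theorem \ref{t7} in place of Theorem \ref{t2}: each divisor $d>1$ of $n$ is either prime or itself overpseudoprime to base $b$, and in both cases $|b|_d\mid d-1$. Your write-up merely makes explicit the hereditary step that the paper leaves implicit.
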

\begin{theorem}\label{t14}
If $n$ is an overpseudoprime to base $b$, then for every two
divisors $d_1 < d_2$ of  $n$, including  $1$ and $n$, we have
\begin{equation}\label{21}
|b|_n |d_2-d_1.
\end{equation}
\end{theorem}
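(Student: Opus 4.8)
The plan is to reduce the divisibility $|b|_n \mid d_2 - d_1$ to a single uniform congruence, namely
\[
d \equiv 1 \pmod{|b|_n} \quad \text{for every divisor } d \text{ of } n,
\]
including the endpoints $d = 1$ and $d = n$. Once this is in hand the conclusion is immediate: for any two divisors $d_1 < d_2$ we subtract the two congruences to get $d_2 - d_1 \equiv 1 - 1 \equiv 0 \pmod{|b|_n}$, which is exactly \eqref{21}.

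To prove the uniform congruence I would first handle the prime divisors of $n$. Let $p$ be any prime dividing $n$. Since $\gcd(n,b) = 1$ we have $\gcd(p,b) = 1$, so Fermat's little theorem gives $b^{p-1} \equiv 1 \pmod p$ and hence $|b|_p \mid p - 1$. On the other hand, $p$ is a divisor $>1$ of $n$, so Theorem \ref{t7} yields $|b|_p = |b|_n$. Combining these two facts gives $|b|_n \mid p - 1$, that is, $p \equiv 1 \pmod{|b|_n}$.

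The next step is to promote this from primes to arbitrary divisors by multiplicativity. Writing $n = p_1^{l_1} \cdots p_k^{l_k}$, every divisor $d$ of $n$ has the form $d = \prod_{i=1}^{k} p_i^{a_i}$ with $0 \le a_i \le l_i$. Since each $p_i \equiv 1 \pmod{|b|_n}$, raising to powers and multiplying preserves the residue $1$, so $d \equiv \prod_{i} 1^{a_i} \equiv 1 \pmod{|b|_n}$; the case $d = 1$ (all $a_i = 0$) is included and trivially satisfies $1 \equiv 1$. This establishes the uniform congruence for all divisors and completes the proof.

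I do not expect a serious obstacle, since every ingredient is elementary; the only point requiring care is keeping the endpoints $d = 1$ and $d = n$ within the scope of the statement, which the multiplicative argument handles automatically. As a remark, the argument can be shortened by invoking Theorem \ref{t13} directly: for every divisor $d > 1$ of $n$ one has $b^{d-1} \equiv 1 \pmod d$, hence $|b|_d \mid d - 1$, and since $|b|_d = |b|_n$ by Theorem \ref{t7}, again $d \equiv 1 \pmod{|b|_n}$.
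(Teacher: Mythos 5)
Your proposal is correct, and the key observation --- that every divisor $d$ of $n$ satisfies $d\equiv 1\pmod{|b|_n}$, after which one subtracts --- is exactly the paper's; indeed, the shortcut in your closing remark (apply Theorem \ref{t13} to get $|b|_d\mid d-1$ and Theorem \ref{t7} to get $|b|_d=|b|_n$) is verbatim the paper's one-line proof. Your main route through prime divisors, Fermat's little theorem, and multiplicativity is only a slight elaboration of the same idea, with the small merit of handling the endpoint $d_1=1$ explicitly rather than implicitly.
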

\begin{proof}
 By the equation \eqref{20}, we have
$|b|_{d_i}=|b|_n$ divides $d_i-1$, for $i=1,2,$ and thus \eqref{21}
follows.
\end{proof}

\section{Primoverization Process}

Note that, if $n$ is an overpseudoprime to base $b$, a divisor of
$n$ is either prime or overpseudoprime to base $b$.  In this section
we study some kinds of numbers which satisfy this property.

In the sequel, we denote by $\Phi_{n}\left( x\right) $ the $n$-th
cyclotomic polynomial. We recall the following theorems from
Castillo et al. \cite{midypseudoprimes}.

\begin{theorem}
A composite number $N$ with $\gcd\left(  N,\left\vert b\right\vert
_{N}\right) =1,$ is an overpseudoprime to base $b$ if and only if
$\Phi_{\left\vert b\right\vert _{N}}\left(  b\right)  \equiv 0\pmod
N$ and $\left\vert b\right\vert _{N}>1$.
\end{theorem}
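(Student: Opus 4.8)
The plan is to derive this characterization directly from Theorem \ref{t7} together with the classical description of the prime divisors of cyclotomic polynomials. Write $e=|b|_N$ throughout. I will use the standard fact that for a prime $p$ not dividing $b$, setting $f=|b|_p$, one has $p\mid\Phi_n(b)$ if and only if $n=f\,p^{k}$ for some $k\ge 0$; in particular, if $p\mid\Phi_n(b)$ and $p\nmid n$ then $f=n$. I will also use the elementary valuation identity $\nu_p(\Phi_e(b))=\nu_p(b^{e}-1)$, valid whenever $|b|_p=e$: indeed $b^{e}-1=\prod_{d\mid e}\Phi_d(b)$, and for a proper divisor $d\mid e$ a common factor $p$ of $\Phi_d(b)$ would force $|b|_p\mid d<e$, a contradiction, so every factor $\Phi_d(b)$ with $d<e$ is prime to $p$.

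For the forward implication, suppose $N=p_1^{l_1}\cdots p_k^{l_k}$ is an overpseudoprime to base $b$. By Theorem \ref{t7} we have $|b|_d=e$ for every divisor $d>1$ of $N$, so in particular $|b|_{p_i}=e$ for each $i$. Thus each $p_i$ is a primitive prime divisor for the exponent $e$, and the valuation identity gives $\nu_{p_i}(\Phi_e(b))=\nu_{p_i}(b^{e}-1)$. Since $e=|b|_N$ forces $N\mid b^{e}-1$, we get $\nu_{p_i}(b^{e}-1)\ge l_i$, hence $p_i^{l_i}\mid\Phi_e(b)$ for every $i$, and therefore $N\mid\Phi_e(b)$, i.e.\ $\Phi_{e}(b)\equiv 0\pmod N$. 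The condition $e>1$ is immediate once the degenerate possibility $b\equiv 1\pmod N$ is set aside (in that case $e=1$ and $\Phi_1(b)=b-1$).

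For the converse, assume $\Phi_e(b)\equiv 0\pmod N$ with $e>1$. Fix any prime $p\mid N$; then $p\mid\Phi_e(b)$, and since $\gcd(N,e)=1$ we have $p\nmid e$, so the divisor lemma yields $|b|_p=e$. Again the valuation identity gives $\nu_p(\Phi_e(b))=\nu_p(b^{e}-1)$, and because $N\mid\Phi_e(b)$ this common value is at least $\nu_p(N)$. Writing $m=\nu_p(b^{|b|_p}-1)=\nu_p(\Phi_e(b))$, Nathanson's Theorem \ref{poten} shows $|b|_{p^{t}}=|b|_p=e$ for every $t\le m$, in particular for every $t\le\nu_p(N)$. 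Consequently, for an arbitrary divisor $d=\prod_i p_i^{a_i}>1$ of $N$ we obtain, by the Chinese Remainder Theorem, $|b|_d=\lcm_i |b|_{p_i^{a_i}}=\lcm_i e=e=|b|_N$. Theorem \ref{t7} now identifies $N$ as an overpseudoprime to base $b$.

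The main obstacle is the prime-power bookkeeping in both directions: one must pass from the divisibility $p\mid\Phi_e(b)$ to the exact power $p^{\nu_p(N)}\mid\Phi_e(b)$, and this is precisely where the hypothesis $\gcd(N,e)=1$ is indispensable. Coprimality rules out the non-primitive (``intrinsic'') prime divisors of $\Phi_e(b)$ — those primes $p$ with $|b|_p<e$ but $p\mid e$ — which is exactly what allows the clean conclusion $|b|_p=e$ for every prime $p\mid N$; without it, a prime of $N$ could divide $\Phi_e(b)$ while having strictly smaller order, and the equivalence would break. Once coprimality is in force, the valuation identity and Theorem \ref{poten} interlock to transfer the single congruence $\Phi_e(b)\equiv 0\pmod N$ into the full family of order equalities required by Theorem \ref{t7}.
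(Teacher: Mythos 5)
The paper does not actually prove this statement: it is recalled verbatim from Castillo et al.\ \cite{midypseudoprimes}, so there is no internal proof to compare against. Judged on its own, your argument is correct and, pleasingly, it is assembled entirely from tools that appear elsewhere in this very paper: the order-invariance criterion of Theorem \ref{t7}, Motose's description of the prime divisors of $\Phi_n(b)$ (Theorem \ref{th2Motose}, which is exactly your ``standard fact''), and Nathanson's lifting result (Theorem \ref{poten}). The two load-bearing steps both check out: the valuation identity $\nu_p(\Phi_e(b))=\nu_p(b^e-1)$ for primes with $|b|_p=e$ follows correctly from $b^e-1=\prod_{d\mid e}\Phi_d(b)$, and in the converse direction the chain $p\mid\Phi_e(b)$, $p\nmid e$ $\Rightarrow$ $|b|_p=e$ $\Rightarrow$ $\nu_p(b^e-1)\ge\nu_p(N)$ $\Rightarrow$ $|b|_{p^{\nu_p(N)}}=e$ is exactly where the hypothesis $\gcd(N,|b|_N)=1$ earns its keep, as you observe. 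The passage to arbitrary divisors via $|b|_d=\lcm_i|b|_{p_i^{a_i}}$ and Theorem \ref{t7} is routine.

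One loose end you flag but do not close: the claim $|b|_N>1$ in the forward direction. With the paper's literal definition, a composite $N$ dividing $b-1$ (e.g.\ $N=9$, $b=10$) satisfies $N=r_b(N)|b|_N+1$ with $|b|_N=1$, so the forward implication fails for such $N$ unless the definition is understood to exclude $N\mid b-1$ (as the paper's phrase ``a prime $p$ which does not divide $b(b-1)$'' suggests is intended). This is a defect of the statement as transcribed rather than of your proof, but a complete write-up should either exclude that case by hypothesis or note explicitly that $|b|_N>1$ is being taken as part of the standing assumptions.
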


\begin{theorem}
\label{primover}Let $N>2$ and $P_{N}\left(  b\right)
=\dfrac{\Phi_{N}\left( b\right)  }{\gcd\left(  N,\ \Phi_{N}\left(
b\right)  \right)  }$. If $P_{N}\left(  b\right)  $ is composite,
then $P_{N}\left(  b\right)  $ is an overpseudoprime to base $b$.
\end{theorem}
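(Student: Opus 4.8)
The plan is to reduce the statement to the characterization of overpseudoprimes via cyclotomic polynomials stated immediately above. Write $M := P_N(b) = \Phi_N(b)/\gcd(N,\Phi_N(b))$ and assume $M$ is composite. Since $M \mid \Phi_N(b)$, the congruence $\Phi_N(b) \equiv 0 \pmod M$ is automatic; hence, once I know that $|b|_M = N$ and that $\gcd(M,|b|_M)=1$, the preceding theorem (applied with $M$ in place of its ``$N$'') will give at once that $M$ is an overpseudoprime to base $b$, because $|b|_M = N > 2 > 1$. So everything comes down to identifying the prime factors of $M$, computing $|b|_M$, and checking the coprimality $\gcd(M,N)=1$; the condition $\gcd(M,b)=1$ will drop out along the way.

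First I would recall the classical description of the prime divisors of a cyclotomic value $\Phi_N(b)$. Every prime $p \mid \Phi_N(b)$ divides $b^N-1$, so $|b|_p \mid N$, and a standard lemma splits these primes into two types. Either $|b|_p = N$ --- call $p$ \emph{primitive}, whence $N \mid p-1$ and in particular $p \nmid N$ --- or else $p \mid N$, in which case $p$ is the largest prime factor of $N$, one has $|b|_p = N/p^{\nu_p(N)}$, and moreover $\nu_p(\Phi_N(b)) = 1$. Consequently $\gcd(N,\Phi_N(b))$ is either $1$ or this single non-primitive prime to the first power, and passing to $M = \Phi_N(b)/\gcd(N,\Phi_N(b))$ excises it completely. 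Thus every prime factor of $M$ is primitive: for each prime $p \mid M$ we have $|b|_p = N$ and $p \nmid N$. In particular $\gcd(M,N)=1$ and $\gcd(M,b)=1$.

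Next I would upgrade this from primes to prime powers in order to pin down $|b|_M$. Fix a prime power $p^a \| M$; by the previous paragraph $\nu_p(\Phi_N(b)) = a$ and $p$ is primitive. Since $b^N-1 = \prod_{e \mid N}\Phi_e(b)$ and a primitive prime of $\Phi_N$ cannot divide $\Phi_e(b)$ for any proper divisor $e$ of $N$, only the factor $e=N$ contributes, giving $\nu_p(b^N-1) = \nu_p(\Phi_N(b)) = a$. Hence in the notation of Theorem \ref{poten} we have $m = \nu_p(b^{|b|_p}-1) = \nu_p(b^N-1) = a = t$, so that theorem yields $|b|_{p^a} = |b|_p = N$. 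As $|b|_M = \lcm_{p^a \| M}|b|_{p^a}$ and every term equals $N$, I conclude $|b|_M = N$.

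Finally, assembling the pieces: $M$ is composite, $|b|_M = N > 1$, $\gcd(M,|b|_M) = \gcd(M,N) = 1$, and $\Phi_{|b|_M}(b) = \Phi_N(b) \equiv 0 \pmod M$, so the characterization theorem above shows that $M = P_N(b)$ is an overpseudoprime to base $b$. The main obstacle is the structural input of the second and third paragraphs: both the multiplicity statement $\nu_p(\Phi_N(b)) = 1$ for a non-primitive prime (which guarantees that dividing by $\gcd(N,\Phi_N(b))$ removes it cleanly) and the dual identity $\nu_p(b^N-1) = \nu_p(\Phi_N(b))$ for a primitive prime (which feeds Theorem \ref{poten}); the small primes, and especially the behaviour at $p=2$, are where one must argue most carefully.
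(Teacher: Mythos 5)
The paper does not actually prove Theorem \ref{primover}: it is recalled from Castillo et al.\ \cite{midypseudoprimes} without proof, so there is no internal argument to measure yours against. Judged on its own, your proof is correct, and it is assembled from precisely the ingredients the paper introduces immediately afterwards: Motose's Theorem \ref{th2Motose} classifying the prime divisors of $\Phi_N(b)$ (which gives your dichotomy between primitive primes and the at most one non-primitive prime, of multiplicity one --- this is the content of Corollary \ref{th7Trio}), and Nathanson's Theorem \ref{poten} to lift $\left\vert b\right\vert_p=N$ to $\left\vert b\right\vert_{p^a}=N$ via the identity $\nu_p(b^N-1)=\nu_p(\Phi_N(b))$ for a primitive prime. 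Two small remarks. First, the caution you raise about $p=2$ evaporates: a primitive prime satisfies $N\mid p-1$ with $N>2$, hence $p\geq N+1>3$ is odd and Theorem \ref{poten} applies without further care (equivalently, $2$ can divide $\Phi_N(b)$ only non-primitively, when $N$ is a power of $2$, and is then excised by the division by $\gcd(N,\Phi_N(b))$). Second, you could shorten the endgame and avoid verifying $\gcd(M,\left\vert b\right\vert_M)=1$ altogether: once you know $\left\vert b\right\vert_{p^j}=N$ for every prime power $p^j$ dividing $M$, every divisor $d>1$ of $M$ satisfies $\left\vert b\right\vert_d=N=\left\vert b\right\vert_M$ by taking least common multiples, and Theorem \ref{t7} then gives the conclusion directly, without passing through the unnumbered cyclotomic characterization. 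Either route is sound.
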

 The last theorem leave us to the next definition.

 \begin{definition}
 A positive integer is called primover to base $b$ if it is either
 prime or an overpseudoprime to base $b$.
 \end{definition}

 By Theorem \ref{t7}, we know that each divisor greater than $1$, of
 a overpseudoprime to base $b$ is primover to the same base $b$. By Corollary \ref{2}, $M_p$ is primover to base $2$.

Theorem \ref{primover} suggests that we need to know the value of
$\gcd\left(  N,\ \Phi_{N}\left( b\right)  \right)$. To that
objective, we recall a result from Motose \cite[Th.\ 2]{Motose2}.

\begin{theorem}\label{th2Motose}
We set $n\geq 2$, $a\geq 2$. Then $p$ is a prime divisor of
$\Phi_n(b)$ if and only if $\gcd(b,p)=1$ and $n=p^{\gamma}|b|_p$
where $\gamma\geq 0$. A prime divisor $p$ of $\Phi_n(b)$ for $n\geq
3$ has the property such that $n=|a|_p$ or $\nu_p(\Phi_n(b))=1$ as
$\gamma=0$ or not.
\end{theorem}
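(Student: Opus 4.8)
The plan is to pin down the exact $p$-adic valuation $\nu_p(\Phi_n(b))$ and read both assertions off the formula. First I would record the elementary constraints. Since $\Phi_n(b)$ divides $b^n-1$, any prime $p\mid\Phi_n(b)$ satisfies $p\mid b^n-1$, which forces $\gcd(b,p)=1$ and $d:=|b|_p\mid n$; conversely, if $d\nmid n$ then $p\nmid b^n-1$ and \emph{a fortiori} $p\nmid\Phi_n(b)$. This already reduces the ``only if'' direction to showing that $n/d$ must be a power of $p$, and lets me assume $d\mid n$ throughout.

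Next I would exploit $b^n-1=\prod_{e\mid n}\Phi_e(b)$. Taking $\nu_p$ gives $\nu_p(b^n-1)=\sum_{e\mid n}\nu_p(\Phi_e(b))$, and M\"obius inversion yields
\[
\nu_p(\Phi_n(b))=\sum_{e\mid n}\mu(n/e)\,\nu_p(b^e-1).
\]
To evaluate the right-hand side I would invoke the lifting-the-exponent lemma: for odd $p$ with $p\mid b^d-1$ one has $\nu_p(b^e-1)=m+\nu_p(e/d)$ whenever $d\mid e$, where $m:=\nu_p(b^d-1)\ge1$, while $\nu_p(b^e-1)=0$ when $d\nmid e$. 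Writing $n=dM$ and $e=df$ with $f\mid M$, the terms with $d\nmid e$ vanish and the sum collapses to
\[
\nu_p(\Phi_n(b))=m\sum_{f\mid M}\mu(M/f)+\sum_{f\mid M}\mu(M/f)\,\nu_p(f).
\]

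The combinatorial heart is the evaluation of these two M\"obius sums. The first is the familiar $\sum_{f\mid M}\mu(M/f)=[M=1]$. For the second I would write $\nu_p(f)=\sum_{j\ge1}[p^j\mid f]$ and interchange summation (or simply use multiplicativity) to obtain $\sum_{f\mid M}\mu(M/f)\nu_p(f)=1$ if $M$ is a positive power of $p$ and $0$ otherwise. Combining, $\nu_p(\Phi_n(b))=m\,[M=1]+[M=p^{\gamma},\ \gamma\ge1]$. Hence $p\mid\Phi_n(b)$ exactly when $M=n/d$ is a power of $p$, that is $n=p^{\gamma}|b|_p$ with $\gamma\ge0$; and for $n\ge3$ the case $\gamma=0$ is precisely $n=|b|_p$, while $\gamma\ge1$ forces $\nu_p(\Phi_n(b))=1$, which is the second assertion.

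The main obstacle I anticipate is the prime $p=2$, where lifting-the-exponent takes its exceptional form. Here $\gcd(b,2)=1$ forces $b$ odd and $d=|b|_2=1$, so $n=2^{\gamma}$ and $\Phi_{2^{\gamma}}(b)=b^{2^{\gamma-1}}+1$; I would treat this directly, checking that $b^{2^{\gamma-1}}+1\equiv2\pmod4$ for $\gamma\ge2$ to recover $\nu_2=1$, which also explains the hypothesis $n\ge3$ in the multiplicity claim (it excludes $\Phi_1=b-1$ and $\Phi_2=b+1$). A cleaner but less self-contained alternative would be to argue over $\mathbb{F}_p$: writing $n=p^{\gamma}n'$ with $p\nmid n'$, one has $\Phi_n\equiv\Phi_{n'}^{\phi(p^{\gamma})}\pmod p$ with $\Phi_{n'}$ splitting into distinct irreducibles, so $\bar b$ is a root iff $|b|_p=n'$; this gives the $n=p^{\gamma}|b|_p$ characterization at once, the simple-root property supplying $\nu_p=1$ after a short $p$-adic lift.
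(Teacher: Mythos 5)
Your argument is correct, but note that the paper itself gives no proof of this statement: it is imported verbatim as Theorem 2 of Motose's \emph{On values of cyclotomic polynomials.\ II}, so there is nothing internal to compare against. What you have written is a sound, self-contained proof of the cited result (modulo the paper's own typo conflating $a$ and $b$, which you resolve correctly). The computation
\[
\nu_p(\Phi_n(b))=\sum_{e\mid n}\mu(n/e)\,\nu_p(b^e-1)=m\,[M=1]+[M=p^{\gamma},\ \gamma\ge 1],\qquad M=n/|b|_p,
\]
is exactly right: the first M\"obius sum is the indicator of $M=1$, and for the second, writing $\nu_p=\mathbf{1}*g$ with $g$ the indicator of positive powers of $p$ gives $\mu*\nu_p=g$, so the whole theorem (both the divisibility criterion and the multiplicity-one statement for $\gamma\ge 1$) is read off the displayed formula. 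The two places that genuinely need the care you flag are (i) the case $p=2$, where LTE fails in its odd-prime form; your direct check that $b^{2^{\gamma-1}}+1\equiv 2\pmod 4$ for $\gamma\ge 2$ handles the multiplicity, and for the divisibility criterion one can note that if an odd prime $q$ divides $n$ then $\Phi_n(b)$ divides $1+b^{n/q}+\cdots+b^{(q-1)n/q}$, a sum of $q$ odd terms, hence is odd; and (ii) the vanishing of all terms with $|b|_p\nmid e$, which you state and which is what makes the collapse to divisors of $M$ legitimate. Your alternative sketch over $\mathbb{F}_p$ via $\Phi_n\equiv\Phi_{n'}^{\phi(p^{\gamma})}$ is the other standard route and would serve equally well. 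In short: the proposal fills a gap the paper deliberately leaves to a reference, and does so correctly.
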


Let $p$ be the greatest prime divisor of $N$. We claim that either
$\gcd(N,\Phi_N(b))=1$ or $p$. Indeed, assume that there is a prime
$q<p$ divisor of $N$ and $\Phi_N(b)$. Thus, Theorem \ref{th2Motose}
implies that $N=q^{\gamma}|b|_q$. But as $p$ divides $N$, we obtain
a contradiction. So $\gcd(N,\Phi_N(b))$, is either $1$ or a power of
$p$. If $\gcd(N,\Phi_N(b))>1$, then $N=p^l|b|_p$. Since $l>0$,
Theorem \ref{th2Motose} implies that $p^2$ does not divide
$\Phi_N(b)$. Therefore, we get the following corollary.

\begin{corollary}\label{th7Trio}
Let $N>1$ and $p$ the greatest prime divisor of $N$. Then
$\gcd(N,\Phi_N(b))=1$ or $p$.
\end{corollary}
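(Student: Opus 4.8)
The plan is to classify which primes can simultaneously divide $N$ and $\Phi_N(b)$ by invoking Motose's characterization, Theorem \ref{th2Motose}, and then to bound the multiplicity with which the greatest such prime occurs. First I would suppose $\gcd(N,\Phi_N(b))>1$ and pick any prime $q$ dividing both $N$ and $\Phi_N(b)$; the goal is to force $q=p$ and then $\nu_p(\gcd(N,\Phi_N(b)))=1$.

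The key step is to rule out a common prime divisor $q<p$. By Theorem \ref{th2Motose}, if $q\mid\Phi_N(b)$ then $N=q^{\gamma}|b|_q$ for some $\gamma\geq 0$. Since $|b|_q$ divides $q-1$, every prime factor of $|b|_q$ is at most $q-1<q$, so every prime factor of $N=q^{\gamma}|b|_q$ is at most $q$. This contradicts the fact that $p\mid N$ with $p>q$. Hence the only prime that can divide both $N$ and $\Phi_N(b)$ is $p$ itself, and therefore $\gcd(N,\Phi_N(b))$ is a power of $p$.

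It remains to check that this power is at most the first. Assuming $\gcd(N,\Phi_N(b))>1$, we have $p\mid\Phi_N(b)$, so Theorem \ref{th2Motose} gives $N=p^{\ell}|b|_p$. Because $p\mid N$ while $|b|_p$ divides $p-1$ and is thus coprime to $p$, we must have $\ell\geq 1$, i.e.\ the exponent $\gamma=\ell$ is positive. The second assertion of Theorem \ref{th2Motose} then yields $\nu_p(\Phi_N(b))=1$, so $p^2\nmid\Phi_N(b)$. Consequently $\nu_p(\gcd(N,\Phi_N(b)))=\min(\nu_p(N),\nu_p(\Phi_N(b)))=1$, which gives $\gcd(N,\Phi_N(b))=p$. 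Together with the trivial alternative $\gcd(N,\Phi_N(b))=1$, this proves the claim.

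The main obstacle, though a mild one, is the correct reading and combined use of the two halves of Motose's theorem: the first half pins down $N$ in terms of any common prime divisor, and the second half controls the multiplicity of $p$ in $\Phi_N(b)$. The single observation that makes the argument run is that $|b|_q\mid q-1$ forces all prime factors of $|b|_q$ strictly below $q$, which is precisely what contradicts the maximality of $p$.
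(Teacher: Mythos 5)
Your argument is correct and follows the same route as the paper: apply Theorem \ref{th2Motose} to rule out a common prime divisor $q<p$ (the paper leaves implicit the observation you make explicit, that $|b|_q\mid q-1$ forces every prime factor of $N=q^{\gamma}|b|_q$ to be at most $q$), and then use the second assertion of that theorem with $\gamma>0$ to get $\nu_p(\Phi_N(b))=1$. Your write-up simply fills in the details the paper's version states more tersely.
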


In the sequel, we prove that some known kinds of numbers are
primovers to some base $b$.

 \begin{theorem}\label{fermatnumbers}
 A generalized Fermat number, $F_n(b)=b^{2^n}+1$, with $n$ a positive integer and $b$ even;
 is primover to base $b$.
 \end{theorem}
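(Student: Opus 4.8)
The plan is to reduce everything to the order criterion of Theorem \ref{t7}: a composite number $N$ with $\gcd(N,b)=1$ is overpseudoprime to base $b$ precisely when $|b|_N=|b|_d$ for every divisor $d>1$ of $N$. Since ``primover'' means ``prime or overpseudoprime,'' it suffices to show that, whether or not $F_n(b)$ is prime, the order $|b|_d$ is constant across all divisors $d>1$ of $F_n(b)$. Concretely, I would prove that $|b|_d=2^{n+1}$ for every such $d$; then if $F_n(b)$ is composite Theorem \ref{t7} makes it overpseudoprime, and if it happens to be prime it is primover by definition.

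First I would check the hypotheses. Because $F_n(b)=b^{2^{n}}+1$, any common factor of $b$ and $F_n(b)$ divides $1$, so $\gcd(b,F_n(b))=1$ and Theorem \ref{t7} applies. Since $b$ is even, $b^{2^{n}}$ is even and $F_n(b)$ is odd; hence every divisor $d>1$ of $F_n(b)$ satisfies $d\geq 3$. Next, for such a $d$ I would reduce the defining identity modulo $d$: from $d\mid b^{2^{n}}+1$ we get $b^{2^{n}}\equiv -1\pmod{d}$, and squaring yields $b^{2^{n+1}}\equiv 1\pmod{d}$, so $|b|_d\mid 2^{n+1}$. On the other hand, because $d\geq 3$ we have $-1\not\equiv 1\pmod d$, so $b^{2^{n}}\not\equiv 1\pmod d$ and therefore $|b|_d\nmid 2^{n}$. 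The only divisor of $2^{n+1}$ that does not divide $2^{n}$ is $2^{n+1}$ itself, so $|b|_d=2^{n+1}$.

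Applying this to every divisor $d>1$, including $d=F_n(b)$, shows that $|b|_d=|b|_{F_n(b)}=2^{n+1}$ is invariant, which is exactly the condition in Theorem \ref{t7}; this completes the argument. There is essentially no hard step here, only one subtlety to watch: the claim $b^{2^{n}}\not\equiv 1\pmod d$ requires $d\geq 3$, and this is exactly what the evenness hypothesis on $b$ guarantees (it forces $F_n(b)$, and hence every $d>1$, to be odd). Were $b$ odd, the divisor $d=2$ would give order $1$ and destroy the invariance, so the parity assumption is the one place the proof genuinely uses its hypothesis.
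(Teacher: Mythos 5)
Your proof is correct, but it takes a different route from the paper's. You verify the order criterion of Theorem \ref{t7} directly: from $d\mid b^{2^{n}}+1$ you get $b^{2^{n}}\equiv-1\pmod d$, hence $|b|_d\mid 2^{n+1}$ but $|b|_d\nmid 2^{n}$ (using $d\geq 3$, which is where the evenness of $b$ enters), so $|b|_d=2^{n+1}$ for every divisor $d>1$, and the invariance gives the result. The paper instead stays inside its primoverization machinery: it identifies $F_n(b)=\Phi_{2^{n+1}}(b)$, observes that $\gcd\bigl(2^{n+1},\Phi_{2^{n+1}}(b)\bigr)=1$ (again because $b$ is even forces $F_n(b)$ odd), so $P_{2^{n+1}}(b)=F_n(b)$, and invokes Theorem \ref{primover}. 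Your argument is more elementary and self-contained --- it needs only Theorem \ref{t7} and basic order arithmetic, and it makes explicit both the common order $2^{n+1}$ and the precise role of the parity hypothesis. The paper's argument is shorter once Theorem \ref{primover} and Corollary \ref{th7Trio} are available, and it is the template the section reuses for generalized Mersenne numbers, $\Phi_{pq}(b)$, and the later results, so it buys uniformity across the whole section at the cost of relying on the cyclotomic-polynomial machinery. Both proofs are sound.
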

 \begin{proof}
It is well known that if $p$ is prime, then $\Phi_{p^{r}}\left(
x\right)=\dfrac{x^{p^{r}}-1}{x^{p^{r-1}}-1}$, see Bamunoba
\cite[Thm. \ 3.4.6]{Bam} or Gallot \cite[Thm. 1.1]{GAL}. Since
$\gcd\left(  2^{n+1},\
\Phi_{2^{n+1}}\left( b\right)  \right)  =1$, we have $P_{2^{n+1}%
}(b)=F_{n}\left(  b\right) $  and the result follows from Theorem
\ref{primover}.
 \end{proof}

 \begin{theorem}\label{mersennenumber}
A generalized Mersenne number, $M_{p}\left( b\right) =\dfrac
{b^{p}-1}{b-1}$, with $p$  a prime such that $\gcd(p,b-1)=1$, is
primover to base $b$.
\end{theorem}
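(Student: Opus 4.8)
The plan is to realize $M_p(b)$ as the quantity $P_p(b)$ appearing in Theorem \ref{primover} and then invoke that theorem, exactly as was done for generalized Fermat numbers in Theorem \ref{fermatnumbers}. First I would use the fact that for a prime $p$ the $p$-th cyclotomic polynomial is
\[
\Phi_p(x) = \frac{x^p - 1}{x - 1} = 1 + x + \cdots + x^{p-1},
\]
so that $\Phi_p(b) = M_p(b)$. Thus everything reduces to showing $P_p(b) = \Phi_p(b)$, i.e.\ that $\gcd(p, \Phi_p(b)) = 1$, after which Theorem \ref{primover} applies verbatim: if $M_p(b)$ is composite it is an overpseudoprime to base $b$, and if it is not composite it is prime, so in either case it is primover to base $b$.

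The computation of $\gcd(p, \Phi_p(b))$ is the heart of the argument, and it is precisely here that the hypothesis $\gcd(p, b-1) = 1$ enters. Since $p$ is the greatest (indeed the only) prime divisor of $N = p$, Corollary \ref{th7Trio} already tells us that $\gcd(p, \Phi_p(b))$ is either $1$ or $p$, so it suffices to rule out the value $p$, that is, to show $p \nmid \Phi_p(b)$. I would do this by a direct congruence modulo $p$: by Fermat's little theorem $b^p \equiv b \pmod p$, hence $b^p - 1 \equiv b - 1 \pmod p$, and since $\gcd(p, b-1) = 1$ the factor $b - 1$ is invertible modulo $p$, giving
\[
\Phi_p(b) = \frac{b^p - 1}{b - 1} \equiv 1 \pmod p.
\]
Therefore $p \nmid \Phi_p(b)$ and $\gcd(p, \Phi_p(b)) = 1$, so $P_p(b) = \Phi_p(b) = M_p(b)$. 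Alternatively, one can rule out $p \mid \Phi_p(b)$ through Theorem \ref{th2Motose}: a prime $p$ divides $\Phi_p(b)$ only if $p = p^{\gamma}|b|_p$, and since $|b|_p \mid p-1 < p$ this forces $\gamma = 1$ and $|b|_p = 1$, i.e.\ $b \equiv 1 \pmod p$, which the hypothesis $\gcd(p, b-1) = 1$ excludes.

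Applying Theorem \ref{primover} with $N = p$ then finishes the proof whenever $p > 2$. The one point requiring care, and the place where I expect the only genuine friction, is the boundary case $p = 2$, which is not covered by Theorem \ref{primover} since that result requires $N > 2$. Here $\gcd(2, b-1) = 1$ forces $b$ to be even, so $M_2(b) = b + 1$ is odd; moreover $b \equiv -1 \pmod{d}$ for every divisor $d > 1$ of $b+1$, from which one checks directly that $|b|_d = 2$ for all such $d$ (the value $|b|_d = 1$ would force $d \mid 2$, impossible for an odd $d > 1$). By Theorem \ref{t7} the number $b + 1$ is then an overpseudoprime to base $b$ when composite and prime otherwise, disposing of the remaining case and completing the argument.
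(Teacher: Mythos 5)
Your proposal is correct and follows essentially the same route as the paper: identify $M_p(b)=\Phi_p(b)$, show $\gcd(p,\Phi_p(b))=1$, conclude $P_p(b)=M_p(b)$, and invoke Theorem \ref{primover}. You supply two details the paper leaves implicit --- the congruence argument (via Fermat's little theorem and $\gcd(p,b-1)=1$) justifying $\gcd(p,\Phi_p(b))=1$, and the separate treatment of $p=2$, where Theorem \ref{primover} does not apply because it requires $N>2$ --- both of which are welcome additions.
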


\begin{proof}
Note that $\Phi_p(b)=M_{p}\left( b\right)$ and
$\gcd(p,\Phi_p(b))=1$. So $P_p(b)=M_p(b)$ and the result follows
from Theorem \ref{primover}.
\end{proof}

By Theorems \ref{t14} and \ref{mersennenumber}, once again, we can
prove that the numbers $M_p{(b)}$ satisfy a similar property of the
Mersenne numbers $M_p$.
\begin{corollary}\label{cor4}
If ~$\gcd(p,b-1)=1$, then for every pair of divisors $d_1 < d_2$ of
$M_p{(b)}$, including trivial divisors $1$ and $M_p{(b)}$, we have
\begin{equation}\label{22}
 p| d_2-d_1.
\end{equation}
\end{corollary}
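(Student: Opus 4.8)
The plan is to reduce the entire statement to the single computation $|b|_{M_p(b)}=p$, after which both possibilities for $M_p(b)$ fall out quickly. Write $N=M_p(b)=\frac{b^p-1}{b-1}=1+b+\cdots+b^{p-1}$. First I would record two elementary facts about $N$. Since every term of the sum beyond the first is a multiple of $b$, we have $N\equiv 1\pmod b$, so $\gcd(b,N)=1$ and the order $|b|_N$ is defined. Next, from $b^p-1=(b-1)N$ we get $b^p\equiv 1\pmod N$, hence $|b|_N$ divides the prime $p$; thus $|b|_N\in\{1,p\}$.

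To pin down $|b|_N=p$ I would rule out $|b|_N=1$. An order equal to $1$ would mean $b\equiv 1\pmod N$, i.e.\ $N\mid b-1$; but $N=1+b+\cdots+b^{p-1}>b-1$ for every $b\geq 2$ and $p\geq 2$, so this is impossible. Therefore $|b|_N=p$.

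I would then invoke Theorem \ref{mersennenumber}: the hypothesis $\gcd(p,b-1)=1$ guarantees that $N$ is primover to base $b$, i.e.\ $N$ is either prime or an overpseudoprime to base $b$, and I would split accordingly. If $N$ is an overpseudoprime to base $b$, then Theorem \ref{t14} applies verbatim: for every pair of divisors $d_1<d_2$ of $N$ (including $1$ and $N$) we have $|b|_N\mid d_2-d_1$, and since $|b|_N=p$ this is precisely \eqref{22}. If instead $N$ is prime, then its only divisors are $1$ and $N$, so the sole pair to check is $(d_1,d_2)=(1,N)$; here Fermat's little theorem gives $b^{N-1}\equiv 1\pmod N$, whence $|b|_N\mid N-1$, and again $|b|_N=p$ yields $p\mid N-1=d_2-d_1$.

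The only point needing care --- and the reason the claim does not follow immediately from Theorem \ref{t14} alone --- is that Theorem \ref{t14} is stated only for overpseudoprimes, so the prime case must be dispatched by the separate Fermat argument above. This obstacle is mild; everything else is routine once $|b|_N=p$ has been established, and indeed the corollary is exactly the base-$b$ analogue of the divisibility property already noted for the classical Mersenne numbers $M_p$.
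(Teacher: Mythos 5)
Your proof is correct and follows essentially the route the paper intends: the paper gives no written proof but states that the corollary follows from Theorems \ref{t14} and \ref{mersennenumber}, and you have simply filled in the details (the computation $|b|_{M_p(b)}=p$, the overpseudoprime case via Theorem \ref{t14}, and the prime case via Fermat's little theorem).
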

The following corollary give us an interesting property of $M_r(b)$.
\begin{corollary} Let $r$ be a prime with $\gcd(r,b-1)=1$. Then $M_r(b)$ is prime if and only if the progression $(1+rx)_{x\geq0}$
contains just one prime $p$ such that $|b|_p=r$.
\end{corollary}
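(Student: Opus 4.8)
The plan is to pin down, via Motose's theorem, exactly which primes divide $M_r(b)=\Phi_r(b)$, and then to recognize these as precisely the primes of the progression $(1+rx)_{x\ge 0}$ having multiplicative order $r$. First I would record the basic reduction. By Theorem~\ref{th2Motose}, a prime $p$ divides $\Phi_r(b)=M_r(b)$ if and only if $\gcd(p,b)=1$ and $r=p^{\gamma}|b|_p$ for some $\gamma\ge 0$. Since $r$ is prime, $\gamma\ge 1$ would force $p=r$ and $|b|_r=1$, i.e.\ $r\mid b-1$, contradicting $\gcd(r,b-1)=1$. Hence $\gamma=0$, so $|b|_p=r$ for every prime divisor $p$ of $M_r(b)$.

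Next I would establish the reverse correspondence. If $p$ is any prime with $|b|_p=r$, then $p\nmid b-1$ (because $r>1$), so $p\mid \frac{b^r-1}{b-1}=M_r(b)$; moreover $|b|_p$ divides $p-1$, whence $r\mid p-1$ and $p=1+rx$ for some $x\ge 1$ (while $x=0$ gives $1$, not a prime). Combining this with the previous paragraph shows that the primes occurring in $(1+rx)_{x\ge 0}$ with $|b|_p=r$ are exactly the prime divisors of $M_r(b)$. This bijection is the heart of the equivalence.

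With the correspondence in hand, the forward implication is immediate: if $M_r(b)$ is prime, it equals its unique prime divisor $p$, and $|b|_p=r$ by the first step, so the progression contains exactly one prime with order $r$. For the converse, uniqueness of the prime $p$ with $|b|_p=r$ means $M_r(b)$ has exactly one prime divisor, so $M_r(b)=p^k$, and I then want to force $k=1$, giving $M_r(b)=p$ prime.

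The main obstacle is precisely this last step of excluding a proper prime power. The natural tool is that $M_r(b)$ is primover to base $b$ (Theorem~\ref{mersennenumber}), combined with Theorem~\ref{t11}, which yields squarefreeness unless $M_r(b)$ is divisible by the square of a Wieferich prime to base $b$. The delicate point is that this exceptional case genuinely occurs: for $r=5$, $b=3$ one has $M_5(3)=121=11^2$ with $|3|_{11}=5$ and $11$ Wieferich to base $3$, so the progression contains exactly one prime of order $r$ while $M_r(b)$ is not prime. Thus a fully rigorous argument must either rule out such prime powers (e.g.\ by Nagell--Ljunggren--type results on when $\frac{b^r-1}{b-1}$ is a perfect power) or read the statement under an implicit squarefreeness hypothesis; modulo this point, the bijection together with squarefreeness delivers the equivalence.
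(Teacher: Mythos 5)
Your proposal follows the same basic route as the paper's own proof --- identify the prime divisors of $M_r(b)=\Phi_r(b)$ as exactly the primes $p\equiv 1\pmod r$ with $|b|_p=r$, then use the fact that a composite $M_r(b)$ is overpseudoprime to base $b$ to produce a second such prime --- but you are more careful at the one point where that argument actually breaks. The paper's converse says ``if $M_r(b)$ is composite, then it is overpseudoprime to base $b$ and thus to other prime divisor $q$ of $M_r(b)$ we obtain $|b|_q=r$''; this tacitly assumes that a composite $M_r(b)$ has at least two distinct prime divisors, which fails when $M_r(b)$ is a proper prime power. Your counterexample is correct and genuinely refutes the statement as written: for $r=5$, $b=3$ one has $\gcd(r,b-1)=1$ and $M_5(3)=121=11^2$ composite, yet $3^5-1=242=2\cdot 11^2$ shows that $11$ is the only prime with $|3|_p=5$, so the progression $(1+5x)_{x\ge 0}$ contains exactly one such prime. (Indeed $3^5\equiv 1\pmod{121}$, so $11$ is a Wieferich prime to base $3$ and the squarefreeness conclusion of Theorem~\ref{t11} is unavailable.) So your analysis is right: the bijection between prime divisors of $M_r(b)$ and primes $p\equiv 1\pmod r$ with $|b|_p=r$ is exactly the paper's mechanism, the forward implication is sound, but the converse requires excluding $M_r(b)=p^k$ with $k\ge 2$, i.e., an additional hypothesis ruling out squares of base-$b$ Wieferich primes (or a Nagell--Ljunggren-type input), and the paper's proof silently skips this case.
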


\begin{proof}
Assume that $M_r(b)$ is prime. If there exists a prime $p$, such
that $|b|_p=r$, then $p=M_r(b)$. Since $r|p-1$, i.e., $p$ is the
unique prime in the progression $(1+rx)_{x\geq 0}$.

Conversely, assume that there exists only one prime of the form
$p=1+rx$, with $x\geq 0$, such that $|b|_p=r$. So $p$ divides
$M_r(b)$. If $M_r(b)$ is composite, then it is overpseudoprime to
base $b$ and thus to other prime divisor $q$ of $M_r(b)$ we obtain
$|b|_q=r$. This contradicts our assumption.
\end{proof}

The next result shows that Fermat numbers to base $2$ are the only
ones, of the form $2^m+1$, which are primover to base $2$.

\begin{theorem}
The following properties hold.
\begin{enumerate}
\item  Assume that $b$ is even. Then $P_m(b)=b^{m}+1$
is primover to base $b$ if and only if $m$ is a power of $2$.

\item Suppose that $\gcd(n,b-1)=1$. Then $M_n(b)=\dfrac{b^{n}-1}{b-1}$ is primover to base $b$ if and only if  $n$
 is prime.
\end{enumerate}
\end{theorem}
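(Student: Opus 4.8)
The plan is to reduce each equivalence to the two existence results already proved and to a single application of Theorem~\ref{t7}. For the ``if'' directions there is nothing to do: if $m=2^{s}$ then $b^{m}+1=F_{s}(b)$ is primover by Theorem~\ref{fermatnumbers}, and if $n$ is prime then $\frac{b^{n}-1}{b-1}=M_{n}(b)$ is primover by Theorem~\ref{mersennenumber}. The substance lies in the two ``only if'' directions, which I would prove by contraposition: when $m$ is not a power of $2$ (resp.\ $n$ is composite) I will show the number in question is \emph{composite} and then exhibit a divisor $d>1$ whose order $|b|_{d}$ differs from the order of $b$ modulo the whole number; by Theorem~\ref{t7} this prevents it from being an overpseudoprime, and a composite number that is not an overpseudoprime is not primover. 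The boundary value $m=1$ (giving $b+1$) is handled separately and directly: $b\equiv-1$ modulo every divisor $d>1$ of $b+1$, so $|b|_{d}=2$ throughout and $b+1$ is primover.

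The technical heart of both arguments is a pair of order computations, each obtained by the same magnitude estimate. First, for $k\geq1$ one has $|b|_{b^{k}+1}=2k$: modulo $b^{k}+1$ we have $b^{2k}\equiv1$ and $b^{k}\equiv-1\not\equiv1$, so the order $e$ divides $2k$ but not $k$; and $b^{k}+1\mid b^{e}-1$ forces $b^{e}-1\geq b^{k}+1$, whence $e>k$, and the only divisor of $2k$ exceeding $k$ is $2k$ itself. Second, for $k\geq2$ one has $|b|_{(b^{k}-1)/(b-1)}=k$ by the mirror estimate: the order divides $k$ since $(b^{k}-1)/(b-1)\mid b^{k}-1$, while a proper divisor $e\leq k-1$ would give $\frac{b^{k}-1}{b-1}>b^{k-1}\geq b^{e}>b^{e}-1$, contradicting $\frac{b^{k}-1}{b-1}\mid b^{e}-1$.

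With these two formulas the ``only if'' directions run in parallel. If $m$ is not a power of $2$, write $m=2^{a}t$ with $t>1$ odd; then $d:=b^{2^{a}}+1$ divides $b^{m}+1$, the quotient being the integer $\sum_{i=0}^{t-1}(-1)^{i}b^{2^{a}i}>1$, so $b^{m}+1$ is composite. The order formula gives $|b|_{d}=2^{a+1}$ and $|b|_{b^{m}+1}=2m=2^{a+1}t$, which differ because $t>1$; so by Theorem~\ref{t7} the composite number $b^{m}+1$ is not an overpseudoprime, hence not primover. Symmetrically, if $n$ is composite choose a prime $p\mid n$ with $2\leq p<n$; then $(b^{p}-1)/(b-1)$ is a divisor $>1$ of $N:=(b^{n}-1)/(b-1)$, so $N$ is composite, and any prime $q$ dividing $(b^{p}-1)/(b-1)$ satisfies $|b|_{q}\mid p$, hence $|b|_{q}\leq p<n=|b|_{N}$, again violating the invariance demanded by Theorem~\ref{t7}.

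The only step that requires genuine care is the magnitude argument supporting the two order formulas; everything else reduces to the standard factorizations $b^{k}+1\mid b^{kt}+1$ for odd $t$ and $(b^{m}-1)\mid(b^{n}-1)$ for $m\mid n$, together with a direct appeal to Theorem~\ref{t7}. I therefore expect no real obstacle beyond bookkeeping of these divisibilities and the separate treatment of the edge case $m=1$.
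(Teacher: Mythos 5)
Your proposal is correct, and its overall architecture is the same as the paper's: quote Theorems~\ref{fermatnumbers} and~\ref{mersennenumber} for the ``if'' directions, and for the ``only if'' directions exhibit a divisor $d>1$ whose order modulo $b$ differs from that of the whole number, then invoke Theorem~\ref{t7}. The difference lies in the choice of witnessing divisors, and it is not cosmetic: your choices repair genuine defects in the paper's own argument. In part (1) the paper takes $d=b+1$, asserting that $b+1$ divides $b^{m}+1$ whenever $m$ has an odd prime factor; this fails for even $m$ (for example $3\nmid 2^{6}+1=65$), whereas your divisor $b^{2^{a}}+1$, with $2^{a}$ the exact power of $2$ dividing $m$, always divides $b^{m}+1$ and has order $2^{a+1}\neq 2m$. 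In part (2) the paper declares $b^{p}-1$ to be a proper divisor of $M_{n}(b)=(b^{n}-1)/(b-1)$, which can also fail (for $b=4$, $n=4$, $p=2$ one has $15\nmid 85$), whereas your $(b^{p}-1)/(b-1)$, together with passing to a prime factor $q$ of it so that $|b|_{q}\mid p<n$, genuinely works. Your explicit magnitude arguments establishing $|b|_{b^{k}+1}=2k$ and $|b|_{(b^{k}-1)/(b-1)}=k$, and your separate treatment of the edge case $m=1$ (not covered by Theorem~\ref{fermatnumbers}, which assumes $n\geq 1$ there), supply details the paper leaves implicit or omits. In short: same route, but your version is the one that actually goes through.
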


\begin{proof}
Sufficient conditions were proved in Theorems \ref{fermatnumbers}
and \ref{mersennenumber}.

Now assume that  $m$ has an odd prime divisor. So $b+1$ is a divisor
of $P_m(b)$ and thereby it is not a prime. Since, $\left\vert
b\right\vert _{b+1}=2$ and $\left\vert b \right\vert _{b^m+1}=2m$;
also it is not an overpseudoprime to base $b$.

To prove the necessity of the second part, suppose that $n$ is not
prime. Thus for a prime $p$ divisor of $n$, we have $M_{n}(b)$ is
composite and $b^{p}-1$ is one of its proper divisors. As
$\left\vert b\right\vert _{b^{p}-1}=p$ and $\left\vert b\right\vert
_{M_{n}(b)}=n$, we get that $M_{n}(b)$ is not an overpseudoprime to
base $b.$
\end{proof}

We note that, for $p$ and $q$ primes with $q<p$,
$|b|_{\Phi_{pq}(b)}=pq$.

\begin{theorem}
\label{pq}If $q<p$ are primes, then
\[
N=\frac{(b-1)(b^{pq}-1)}{(b^{p}-1)(b^{q}-1)}%
\]
is primover to base $b$ if and only if $N$ is not multiple of $p$.
\end{theorem}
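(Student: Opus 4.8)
The plan is to first recognize that the quotient $N$ is nothing but a single cyclotomic value. Expanding each factor through $b^{m}-1=\prod_{d\mid m}\Phi_{d}(b)$ and using that the divisors of $pq$ are $1,q,p,pq$, the factors $\Phi_{1}(b),\Phi_{p}(b),\Phi_{q}(b)$ cancel and one is left with $N=\Phi_{pq}(b)$. In particular, by the remark preceding the statement, $|b|_{N}=pq$.

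Next I would determine exactly when $p\mid N$. Since $p$ is the greatest prime divisor of the index $pq$, Corollary \ref{th7Trio} gives $\gcd(pq,\Phi_{pq}(b))\in\{1,p\}$; in particular $q$ never divides $N$, and $p\mid N$ holds precisely when this gcd equals $p$. Equivalently, Theorem \ref{th2Motose} shows $p\mid\Phi_{pq}(b)$ if and only if $pq=p\,|b|_{p}$, that is, if and only if $|b|_{p}=q$.

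For the implication ``$p\nmid N\Rightarrow N$ primover'' I would use that then $\gcd(pq,\Phi_{pq}(b))=1$, so $P_{pq}(b)=\Phi_{pq}(b)/\gcd(pq,\Phi_{pq}(b))=N$. Theorem \ref{primover} then guarantees that $N$ is an overpseudoprime to base $b$ as soon as it is composite, while if $N$ is prime it is primover by definition; either way $N$ is primover. For the converse I would argue contrapositively: if $p\mid N$, then $p$ is a divisor $>1$ of $N$ with $|b|_{p}\mid p-1$, whence $|b|_{p}\le p-1<pq=|b|_{N}$. By Theorem \ref{t7} this rules out $N$ being an overpseudoprime to base $b$, so once we know $N$ is composite it cannot be primover.

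The step I expect to be the main obstacle is precisely this last ``$N$ is composite.'' The contrapositive breaks down only in the degenerate situation $N=p$, where $N$ is prime (hence primover) yet still a multiple of $p$; so one must certify $N>p$ whenever $p\mid N$. Since $\deg\Phi_{pq}=(p-1)(q-1)\ge 2$, a crude lower bound on $\Phi_{pq}(b)$ reduces this to finitely many small indices, and the sole genuine exception is $(b,p,q)=(2,3,2)$, for which $\Phi_{6}(2)=3=p$. Disposing of (or tacitly excluding) this single case is the delicate point; away from it, $p\mid N$ forces $N$ to be a proper composite multiple of $p$, and the argument closes.
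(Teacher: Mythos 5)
Your argument follows the paper's proof almost step for step: identify $N=\Phi_{pq}(b)$, apply Corollary \ref{th7Trio} and Theorem \ref{primover} for sufficiency, and for necessity play $|b|_p$ (which equals $q$ when $p\mid N$, by Theorem \ref{th2Motose}) against $|b|_N=pq$. The one place you go beyond the paper is the degenerate case $N=p$, and that extra care is warranted: the paper's converse rests on the unproved remark that $|b|_{\Phi_{pq}(b)}=pq$, which fails exactly when $\Phi_{pq}(b)=p$, and your example $(b,p,q)=(2,3,2)$, where $N=\Phi_{6}(2)=3=p$ is prime (hence primover) yet a multiple of $p$, is a genuine counterexample to the stated equivalence that the paper overlooks. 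So your proof is correct, and in fact slightly more careful than the paper's, once that single exceptional case is excluded from the statement.
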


\begin{proof}
It is clear that, $N=\Phi_{pq}\left(b\right)$. Assume that $N$ is
not a multiple of $p$. Corollary \ref{th7Trio} implies that
$\gcd\left( pq,\ \Phi_{pq}\left( b\right) \right) =1$ and the result
follows from Theorem \ref{primover}.

Conversely assume that $N$ is primover to base $b$ and $p$ divides
$N$. Thereby, $|b|_p$ divides $q$ and as $|b|_N=pq$, we get a
contradiction.
\end{proof}

\begin{corollary}
With the above notation, if $p$ divides $N$, then $\dfrac {N}{p}$ is
primover to base $b$.
\end{corollary}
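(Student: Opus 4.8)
The plan is to recognize $N/p$ as the primoverized number $P_{pq}(b)$ and then invoke Theorem \ref{primover}. Recall from the proof of Theorem \ref{pq} that $N=\Phi_{pq}(b)$, so the relevant cyclotomic index is $pq$, whose greatest prime divisor is $p$. Applying Corollary \ref{th7Trio} to this index immediately yields the dichotomy $\gcd(pq,\Phi_{pq}(b))\in\{1,p\}$.

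The one substantive step is to pin the gcd down to exactly $p$. By hypothesis $p\mid N=\Phi_{pq}(b)$, and trivially $p\mid pq$, so $p$ divides $\gcd(pq,\Phi_{pq}(b))$; this excludes the value $1$ and forces $\gcd(pq,\Phi_{pq}(b))=p$. Consequently, directly from the definition appearing in Theorem \ref{primover},
\[
P_{pq}(b)=\frac{\Phi_{pq}(b)}{\gcd(pq,\Phi_{pq}(b))}=\frac{N}{p}.
\]

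With this identification the conclusion is immediate. Since $pq>2$, Theorem \ref{primover} applies to the index $pq$: if $P_{pq}(b)=N/p$ is composite, then it is an overpseudoprime to base $b$, while if $N/p$ happens to be prime it is primover by definition. In either case $N/p$ is primover to base $b$, as claimed.

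Thus the argument is essentially a bookkeeping corollary to Theorem \ref{primover}, and I expect no genuine obstacle beyond confirming that the greatest-prime-divisor dichotomy of Corollary \ref{th7Trio} collapses to $p$. The only mild point to watch is that $N/p$ should be an integer exceeding $1$; this is automatic here, since $p\mid N$ makes $N$ non-primover by Theorem \ref{pq}, hence composite and strictly larger than $p$, so that $N=p\cdot(N/p)$ with $N/p>1$.
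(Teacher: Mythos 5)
Your proposal is correct and matches the intended argument: the paper leaves this corollary unproved, but the evident route is exactly yours, namely that $p\mid\Phi_{pq}(b)$ forces $\gcd\left(pq,\Phi_{pq}(b)\right)=p$ by Corollary \ref{th7Trio}, so $N/p=P_{pq}(b)$ and Theorem \ref{primover} applies. Your extra remark checking that $N/p>1$ is a reasonable precaution not present in the paper.
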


Once again, using Corollary \ref{th7Trio} and Theorem \ref{primover}
we can prove the following theorems.
\begin{theorem}
If $p$ is prime, then
\[
N=\frac{b^{p^{n}}-1}{b^{p^{n-1}}-1}%
\]
is primover to base $b$ if and only if $N$ is not multiple of $p$.
\end{theorem}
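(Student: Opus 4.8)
The plan is to recognize $N$ as a cyclotomic value and then run the same two-sided argument used for Theorem~\ref{pq}. First I would record the identity $\Phi_{p^{n}}(x)=\dfrac{x^{p^{n}}-1}{x^{p^{n-1}}-1}$ (already used in Theorem~\ref{fermatnumbers}), so that $N=\Phi_{p^{n}}(b)$, and then apply Corollary~\ref{th7Trio} with the \emph{index} $p^{n}$ in place of its ``$N$''. Since the greatest prime divisor of $p^{n}$ is $p$, the corollary yields $\gcd(p^{n},\Phi_{p^{n}}(b))\in\{1,p\}$. The notational care needed here is worth flagging: the corollary is applied to the subscript $p^{n}$, not to the integer $N=\Phi_{p^{n}}(b)$ itself.

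Before splitting into the two directions I would establish the auxiliary fact $|b|_{N}=p^{n}$, the analogue of the remark $|b|_{\Phi_{pq}(b)}=pq$ preceding Theorem~\ref{pq}. Since $N\mid b^{p^{n}}-1$, the order $|b|_{N}$ is a power of $p$ dividing $p^{n}$. Picking any prime divisor $\ell\neq p$ of $N$, Theorem~\ref{th2Motose} forces $p^{n}=\ell^{\gamma}|b|_{\ell}$ with $\ell\neq p$, hence $\gamma=0$ and $|b|_{\ell}=p^{n}$; as $|b|_{\ell}\mid|b|_{N}\mid p^{n}$ this pins down $|b|_{N}=p^{n}$. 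Such an $\ell$ exists because $\nu_{p}(N)\le 1$ by Theorem~\ref{th2Motose}, so $N>p$ guarantees a prime divisor other than $p$.

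For sufficiency, assume $p\nmid N$. Then $\gcd(p^{n},\Phi_{p^{n}}(b))=1$, so $P_{p^{n}}(b)=\Phi_{p^{n}}(b)=N$. If $N$ is prime it is primover by definition; if $N$ is composite, Theorem~\ref{primover} shows $P_{p^{n}}(b)=N$ is an overpseudoprime to base $b$, so again $N$ is primover. For necessity, assume $N$ is primover to base $b$ and, toward a contradiction, that $p\mid N$. Then $p$ is a prime divisor of $N$, so $|b|_{p}=|b|_{N}$: this is immediate if $N=p$ is prime, and follows from Theorem~\ref{t7} if $N$ is an overpseudoprime. Combining with $|b|_{N}=p^{n}$ gives $|b|_{p}=p^{n}$, which is impossible since $|b|_{p}\mid p-1<p^{n}$. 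Hence $p\nmid N$.

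I expect the main obstacle to be the auxiliary computation $|b|_{N}=p^{n}$: it is the step that genuinely invokes Motose's theorem and that requires verifying the existence of a prime divisor distinct from $p$, whereas the remainder is a direct transcription of the $\Phi_{pq}$ argument through Corollary~\ref{th7Trio} and Theorem~\ref{primover}.
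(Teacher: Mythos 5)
Your proposal is correct and follows exactly the route the paper intends: the paper gives no separate proof for this theorem, stating only that it follows ``once again'' from Corollary~\ref{th7Trio} and Theorem~\ref{primover} by the same argument as Theorem~\ref{pq}, which is precisely what you carry out, including supplying the auxiliary fact $|b|_{N}=p^{n}$ that plays the role of the unproved remark $|b|_{\Phi_{pq}(b)}=pq$. The only caveat is cosmetic: your appeal to Theorem~\ref{th2Motose} for $\nu_{p}(N)\le 1$ formally requires the index $p^{n}\ge 3$, so the degenerate case $p^{n}=2$ (where $N=b+1$) should be checked directly, which is immediate.
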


\begin{theorem}
Let $n=p_{1}p_{2}\cdots p_{t}$, where  $p_{1}<p_{2}<\cdots<p_{t}$
are primes and let
\[
N=\prod\nolimits_{e|n}\left(  b^{e}-1\right)^{\mu\left(  e\right)
\mu\left( n\right)}.
\]
If $\gcd(N,p_t)=1$, then $N$  is primover to base $b$. In other
case, $\dfrac{N}{p_{t}}$ is  primover to base $b$.
\end{theorem}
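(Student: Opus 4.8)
The plan is to recognize the unwieldy product defining $N$ as a single cyclotomic value, and then to read the conclusion directly off Corollary~\ref{th7Trio} and Theorem~\ref{primover}.

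The first and decisive step is to prove that $N=\Phi_{n}(b)$. Applying M\"obius inversion to the factorization $b^{n}-1=\prod_{d\mid n}\Phi_{d}(b)$ gives $\Phi_{n}(b)=\prod_{d\mid n}(b^{d}-1)^{\mu(n/d)}$. Because $n=p_{1}\cdots p_{t}$ is squarefree, every divisor $e\mid n$ is squarefree too; writing $\omega(e)$ for the number of prime factors of $e$, we have $\mu(e)=(-1)^{\omega(e)}$, $\mu(n)=(-1)^{t}$ and $\mu(n/e)=(-1)^{t-\omega(e)}$, whence $\mu(e)\mu(n)=(-1)^{t+\omega(e)}=(-1)^{t-\omega(e)}=\mu(n/e)$. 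Thus the exponent in the definition of $N$ coincides with the M\"obius-inversion exponent term by term, and reindexing the product by $e=n/d$ yields $N=\Phi_{n}(b)$. I expect this identification to be the main point of the proof; everything after it is an application of the machinery already developed, and it specializes the introductory remark $|b|_{\Phi_{pq}(b)}=pq$ to the case $t=2$.

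With $N=\Phi_{n}(b)$ in hand, I would invoke Corollary~\ref{th7Trio} with index $n$: if $p_{t}$ denotes the greatest prime divisor of $n$, then $\gcd(n,\Phi_{n}(b))$ equals either $1$ or $p_{t}$. Since $p_{t}\mid n$ always, this gcd equals $p_{t}$ precisely when $p_{t}\mid \Phi_{n}(b)=N$, that is, exactly when $\gcd(N,p_{t})\neq 1$, and it equals $1$ exactly when $\gcd(N,p_{t})=1$. This is what aligns the dichotomy of Corollary~\ref{th7Trio} with the hypothesis of the theorem.

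Finally I would apply Theorem~\ref{primover} with index $n$, so that $P_{n}(b)=\Phi_{n}(b)/\gcd(n,\Phi_{n}(b))=N/\gcd(n,N)$. If $\gcd(N,p_{t})=1$ then $P_{n}(b)=N$, while if $\gcd(N,p_{t})\neq 1$ then $P_{n}(b)=N/p_{t}$. In either case the relevant quantity is $P_{n}(b)$, and here the one subtlety to handle carefully is that Theorem~\ref{primover} yields overpseudoprimality only when $P_{n}(b)$ is composite. This is exactly where the definition of primover closes the gap: if $P_{n}(b)$ is composite it is overpseudoprime to base $b$ by Theorem~\ref{primover}, and if $P_{n}(b)$ is prime it is primover by definition; either way $P_{n}(b)$ is primover to base $b$. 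Reading off the two cases then gives that $N$ is primover when $\gcd(N,p_{t})=1$ and that $N/p_{t}$ is primover otherwise, as claimed. (One should also check that the index $n$ exceeds $2$ in every nondegenerate case, so that Theorem~\ref{primover} does apply.)
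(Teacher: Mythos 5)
Your proposal is correct and follows exactly the route the paper intends: it identifies $N=\Phi_{n}(b)$ via M\"obius inversion and then applies Corollary~\ref{th7Trio} and Theorem~\ref{primover}, which is precisely how the paper disposes of this theorem (it states only that these two results yield it, in parallel with the proof of Theorem~\ref{pq}). Your explicit verification that $\mu(e)\mu(n)=\mu(n/e)$ for squarefree $n$, and your handling of the prime/composite dichotomy for $P_{n}(b)$, fill in the details the paper leaves implicit.
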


\end{document}